\theoremstyle{plain}
\newtheorem{lem}{Lemma}[section]
\newtheorem{thm}{Theorem}
\newtheorem{prop}[lem]{Proposition}
\theoremstyle{remark}
\newtheorem{rem}[lem]{Remark}
\theoremstyle{definition}
\newtheorem{defi}[lem]{Definition}
\providecommand{\abs}[1]{\lvert#1\rvert} 
\providecommand{\norm}[1]{\lVert#1\rVert}
\DeclareMathOperator{\ud}{\!d\!}
\DeclareMathOperator{\Ud}{D}
\DeclareMathOperator{\dist}{dist}
\DeclareMathOperator{\esssup}{ess\, sup}
\DeclareMathOperator{\Div}{div}
\DeclareMathOperator{\Rot}{rot}
\DeclareMathOperator{\Dt}{\frac{\ud}{\ud t}}
\DeclareMathOperator{\Id}{Id}
\numberwithin{equation}{section}
\begin{document}

\title{Long-time behavior of micropolar fluid equations in cylindrical domains}
\author{Bernard Nowakowski}
\thanks{The author is partially supported by Polish KBN grant N N201 393137}
\address{Bernard Nowakowski\\ Institute of Mathematics\\ Polish Academy of Sciences\\ \'Snia\-deckich 8\\ 00-956 Warsaw\\ Poland}
\email{bernard@impan.pl}

\subjclass[2000]{35Q30, 76D05}

\date{November, 2011}

\begin{abstract}
	In this paper we investigate the existence of $H^1$-uniform attractor and long-time behavior of solutions to non-autonomous micropolar fluid equations in three dimensional cylindrical domains. In our considerations we take into account that existence of global and strong solutions is proved under the assumption on smallness of change of the initial and the external data along the axis of the cylinder. Therefore, we refine the concept of uniform attractor by adopting the idea which was proposed by J.W. Cholewa and T. Dłotko in \cite{chol1}.
\end{abstract}

\keywords{micropolar fluids, cylindrical domains, qualitative properties of solutions, uniform restricted attractor}
\maketitle

\section{Introduction}

The study of attractors is an important part of examining dynamical systems. It was thoroughly investigated in many works (see e.g. \cite{Hale:1988fk}, \cite{Ladyzhenskaya:1991uq}, \cite{Temam:1997vn}, \cite{chol}, \cite{che}, \cite{Sell:2002kx}). These classical results cover many both autonomous and non-autonomous equations of mathematical physics. Roughly speaking, the general abstract theory justifying attractors' existence may be applied when the equations describing autonomous systems posses unique, strong and global solutions. For non-autonomous systems some additional uniformity for external data is required. Further obstacles arise when unbounded domains are examined (see e.g. \cite{luk5} and \cite{zhao} and references therein).

In three dimensions the situation for nonlinear evolution problems is even more complex. In case of micropolar equations we lack information about the regularity of weak solutions at large. Thus, results concerning the existence of uniform attractors are merely conditional (see e.g. \cite{che}). In this article we present some remedy which is based on \cite{chol1}. It takes into account that global and strong solutions exist if some smallness of $L_2$-norms on the rate of change of the external and the initial data is assumed (see \cite{2012arXiv1205.4507N}). This leads to a restriction of the uniform attractor to a proper phase space.

Introduced by A.~Eringen in \cite{erin}, the micropolar fluid equations form a useful generalization of the classical Navier-Stokes model in the sense that they take into account the structure of the media they describe. With many potential applications (see e.g. \cite{pop1}, \cite{pop2}, \cite{ari}, \cite{luk1}) they became an interesting and demanding area of interest for mathematicians and engineers (see e.g. \cite{erin}, \cite{pad}, \cite{gald}, \cite{ort}, \cite{luk1}, \cite{luk2} et al.)

In this article we study the following initial-boundary value problem
\begin{equation}
	\begin{aligned}\label{p1}
		&v_{,t} + v\cdot \nabla v - (\nu + \nu_r)\triangle v + \nabla p = 2\nu_r\Rot\omega + f   & &\text{in } \Omega^{\infty} := \Omega\times(0,\infty),\\
		&\Div v = 0 & &\text{in } \Omega^{\infty},\\
		&\begin{aligned}
			&\omega_{,t} + v\cdot \nabla \omega - \alpha \triangle \omega - \beta\nabla\Div\omega + 4\nu_r\omega \\
			&\mspace{60mu} = 2\nu_r\Rot v + g
		\end{aligned}& &\text{in } \Omega^{\infty}, \\
		&v\cdot n = 0 & &\text{on $S^{\infty} := S\times(0,\infty)$}, \\
		&\Rot v \times n = 0  & &\text{on $S^{\infty}$}, \\
		&\omega = 0 & &\text{on $S_1^{\infty}$}, \\
		&\omega' = 0, \qquad \omega_{3,x_3} = 0 & &\text{on $S_2^{\infty}$}, \\	
		&v\vert_{t = 0} = v(0), \qquad \omega\vert_{t = 0} = \omega(0) & &\text{in $\Omega$}.
    \end{aligned}
\end{equation}
By $\Omega \subset \mathbb{R}^3$ we mean a cylindrical type domain, i.e.
\begin{equation*}
	\Omega = \left\{\left(x_1,x_2\right)\in\mathbb{R}^2\colon \varphi\left(x_1,x_2\right) \leq c_0\right\} \times \left\{ x_3\colon -a \leq x_3 \leq a\right\},
\end{equation*}
where $a > 0$ and $c_0$ are real constants, $\varphi$ is a closed curve of class $\mathcal{C}^2$. The functions $v\colon\mathbb{R}^3\to\mathbb{R}^3$, $\omega\colon\mathbb{R}^3\to\mathbb{R}^3$ and $p\colon\mathbb{R}^3\to\mathbb{R}$ denote the velocity field, the micropolar field and the pressure. The external data (forces and momenta) are represented by $f$ and $g$ respectively. The viscosity coefficients $\nu$ and $\nu_r$ are real and positive. 

The choice of the boundary conditions for $v$ and $\omega$ was thoroughly described in \cite{2012arXiv1205.4046N}. Note that usually the zero Dirichlet condition is assumed for both the velocity and micropolar field. From physical point of view this is not suitable for some classes of fluids (see e.g. \cite{bou}). In fact, the explanation of particles behavior at the boundary is far more complex and was extensively discussed in \cite{con}, \cite{all3}, \cite{mig} and \cite{eri6}.

The article is organized as follows: in the next Section we present an overview of the current state of the art. Subsequently we introduce notation and give some preliminary results. In Section \ref{sec4} the main results are formulated. Section \ref{sec5} is entirely devoted to the basics of semi-processes. In Sections \ref{sec6}, \ref{sec7} and \ref{sec8} we prove the existence of the restricted uniform attractor and analyze the behavior of the micropolar fluid flow for the large viscosity $\nu$. We demonstrate that for time independent data the global and unique solution $(v(t),\omega(t))$ converges to the solutions of the stationary problem and if $\nu_r \approx 0$ then the trajectories $(v(t),\omega(t))$ of micropolar flow and $u(t)$ of the Navier-Stokes flow differ indistinctively.

\section{State-of-the-art}

The study of the uniform attractors has mainly been focused on the the case of unbounded domains (see \cite{ukaszewicz:2004kx} and \cite{Dong:2006uq} and the references therein) which is barely covered by general abstract theory introduced in \cite{che}. Note that both these studies cover only two dimensional case. In three dimensions no results have been obtained so far due to lack of regularity of weak solutions. Another obstacle arises from relaxing the assumption of certain uniformity of the external data. To avoid these difficulties, the concepts of trajectory and pull-back attractors have been suggested and applied (see \cite{Chen:2007zr}, \cite{Chen:2009ys}, \cite{luk7}, \cite{Kapustyan:2010vn} and \cite{tar2}).

\section{Notation and preliminary results}\label{sec3}

In this paper we use the following function spaces:
\begin{enumerate}
	\item[$\bullet$] $W^m_p(\Omega)$, where $m \in \mathbb{N}$, $p \geq 1$, is the closure of $\mathcal{C}^{\infty}(\Omega)$ in the norm
		\begin{equation*}
			\norm{u}_{W^m_p(\Omega)} = \left(\sum_{\abs{\alpha} \leq m} \norm{\Ud^{\alpha} u}_{L_p(\Omega)}^p\right)^{\frac{1}{p}},
		\end{equation*}
	\item[$\bullet$] $H^k(\Omega)$, where $k \in \mathbb{N}$, is simply $W^k_2(\Omega)$, 
	\item[$\bullet$] $W^{2,1}_p(\Omega^t)$, where $p \geq 1$, is the closure of $\mathcal{C}^{\infty}(\Omega\times(t_0,t_1))$ in the norm
		\begin{equation*}
			\norm{u}_{W^{2,1}_p(\Omega^t)} = \left(\int_{t_0}^{t_1}\!\!\!\int_{\Omega} \abs{u_{,xx}(x,s)}^p + \abs{u_{,x}(x,s)}^p + \abs{u(x,s)}^p + \abs{u_{,t}(x,s)}^p\, \ud x\, \ud s\right)^{\frac{1}{p}},
		\end{equation*}
	\item[$\bullet$] $\mathcal{V}$ is the set of all smooth solenoidal functions whose normal component vanishes on the boundary
		\begin{equation*}
			\mathcal{V} = \left\{u\in\mathcal{C}^\infty(\Omega)\colon \Div u = 0 \text{ in } \Omega, \ u\cdot n\vert_S = 0 \right\},
		\end{equation*}
	\item[$\bullet$] $H$ is the closure of $\mathcal{V}$ in $L_2(\Omega)$, 
	\item[$\bullet$] $\widehat H = H \times L_2(\Omega)$,
	\item[$\bullet$] $V$ is the closure of $\mathcal{V}$ in $H^1(\Omega)$,
	\item[$\bullet$] $\widehat V = V\times H^1(\Omega)$, 
	\item[$\bullet$] $V^k_2(\Omega^t)$, where $k \in \mathbb{N}$, is the closure of $\mathcal{C}^{\infty}(\Omega\times(t_0,t_1))$ in the norm
		\begin{equation*}
			\norm{u}_{V^k_2(\Omega^t)} = \underset{t\in (t_0,t_1)}{\esssup}\norm{u}_{H^k(\Omega)} \\
    +\left(\int_{t_0}^{t_1}\norm{\nabla u}^2_{H^{k}(\Omega)}\, \ud t\right)^{1/2}.
		\end{equation*}
\end{enumerate}

To shorten the energy estimates we use:
\begin{equation}\label{eq14}
	\begin{aligned}
		E_{v,\omega}(t) &:= \norm{f}_{L_2(t_0,t;L_{\frac{6}{5}}(\Omega))} + \norm{g}_{L_2(t_0,t;L_{\frac{6}{5}}(\Omega))} + \norm{v(t_0)}_{L_2(\Omega)} + \norm{\omega(t_0)}_{L_2(\Omega)}, \\
		E_{h,\theta}(t) &:= \norm{f_{,x_3}}_{L_2(t_0,t;L_{\frac{6}{5}}(\Omega))} + \norm{g_{,x_3}}_{L_2(t_0,t;L_{\frac{6}{5}}(\Omega))} + \norm{h(t_0)}_{L_2(\Omega)} + \norm{\theta(t_0)}_{L_2(\Omega)}.
	\end{aligned}
\end{equation}
The following function is of particular interest
\begin{equation}\label{eq26}
	\delta(t) := \norm{f_{,x_3}}^2_{L_2(\Omega^t)} + \norm{g_{,x_3}}^2_{L_2(\Omega^t)} + \norm{\Rot h(t_0)}^2_{L_2(\Omega)} + \norm{h(t_0)}^2_{L_2(\Omega)} + \norm{\theta(t_0)}^2_{L_2(\Omega)}.
\end{equation}
It expresses the smallness assumption which has to be made in order to prove the existence of regular solutions (see \cite{2012arXiv1205.4046N}. It contains no $L_2$-norms of the initial or the external data but only $L_2$-norms of their derivatives alongside the axis of the cylinder. In other words the data need not to be small but small must be their rate of change.

With the above notation the following Theorem was proved in \cite{2012arXiv1205.4507N}. It is fundamental for further considerations.
\begin{thm}\label{thm0}
	Let $0 < T < \infty$ be sufficiently large and fixed. Suppose that $v(0), \omega(0) \in H^1(\Omega)$ and $\Rot h(0) \in L_2(\Omega)$. In addition, let the external data satisfy $f_3\vert_{S_2} = 0$, $g'\vert_{S_2} = 0$, 
	\begin{equation*}	
		\begin{aligned}
			&\norm{f(t)}_{L_2(\Omega)} \leq \norm{f(kT)}_{L_2(\Omega)}e^{-(t - kT)}, & & &\norm{f_{,x_3}(t)}_{L_2(\Omega)} &\leq \norm{f_{,x_3}(kT)}_{L_2(\Omega)}e^{-(t - kT)},\\
			&\norm{g(t)}_{L_2(\Omega)} \leq \norm{g(kT)}_{L_2(\Omega)}e^{-(t - kT)}, & & &\norm{g_{,x_3}(t)}_{L_2(\Omega)} &\leq \norm{g_{,x_3}(kT)}_{L_2(\Omega)}e^{-(t - kT)}
		\end{aligned}
	\end{equation*}
	and 
	\begin{equation*}
		\sup_k \left\{f(kT),f_{,x_3}(kT),g(kT),g_{,x_3}(kT)\right\} < \infty.
	\end{equation*}
	Then, for sufficiently small $\delta(T)$ there exists a unique and regular solution to problem \eqref{p1} on the interval $(0,\infty)$. Moreover, 
	\begin{multline*}
		\norm{v}_{W^{2,1}_2(\Omega^{\infty})} + \norm{\omega}_{W^{2,1}_2(\Omega^{\infty})} + \norm{\nabla p}_{L_2(\Omega^{\infty})} \leq \sup_k \Big(\norm{f}_{L_2(\Omega^{kT})} + \norm{f_{,x_3}}_{L_2(\Omega^{kT})} \\
		+ \norm{g}_{L_2(\Omega^{kT})} + \norm{g_{,x_3}}_{L_2(\Omega^{kT})} + \norm{v(0)}_{H^1(\Omega)} + \norm{\omega(0)}_{H^1(\Omega)} + 1\Big)^3.
	\end{multline*}
\end{thm}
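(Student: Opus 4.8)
The plan is to obtain the global solution by the standard continuation scheme: derive \emph{a priori} estimates on an arbitrary finite window that are uniform in its length, combine them with the local existence theorem coming from the linear theory and a contraction argument, and rule out blow-up. What makes the cylinder special is that \eqref{p1} may be differentiated with respect to $x_3$ without destroying the boundary conditions: the lateral condition $v\cdot n=0$, $\Rot v\times n=0$ on $S$ is invariant under $x_3$-differentiation, and the conditions $\omega=0$ on $S_1^\infty$, $\omega'=0$, $\omega_{3,x_3}=0$ on $S_2^\infty$ are precisely of the form that survives it. Setting $h:=v_{,x_3}$ and $\theta:=\omega_{,x_3}$ one obtains for $(h,\theta)$ a system of the same structure whose forcing is $(f_{,x_3},g_{,x_3})$ and whose initial data are $(h(0),\theta(0))$ --- exactly the quantities entering $\delta(t)$ in \eqref{eq26}. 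The argument therefore splits into (i) smallness of $(h,\theta)$ in terms of $\delta$, (ii) regularity of the full $(v,\omega)$ once $(h,\theta)$ is small, and (iii) propagation of both through the successive windows $(kT,(k+1)T)$.

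For (i) I would test the $h$-equation with $h$ and with $\Rot\Rot h$, and the $\theta$-equation with $\theta$ and $-\triangle\theta$. The only dangerous terms are those produced by the convective derivative, $h\cdot\nabla v+v\cdot\nabla h$ and $h\cdot\nabla\omega+v\cdot\nabla\theta$; these are handled by the anisotropic embeddings available in a cylinder --- integrability in $x_3$ is essentially free because $-a\le x_3\le a$ --- which let one absorb them into the viscous terms up to a factor proportional to the energy of $(h,\theta)$ itself. Together with the damping $4\nu_r\omega$ and a Gronwall argument this gives $\sup_{t\le T}\bigl(\norm{\Rot h(t)}^2_{L_2(\Omega)}+\norm{h(t)}^2_{L_2(\Omega)}+\norm{\theta(t)}^2_{L_2(\Omega)}\bigr)+\int_0^T\bigl(\norm{\nabla h}^2_{L_2(\Omega)}+\norm{\nabla\theta}^2_{L_2(\Omega)}\bigr)\,\ud t\le C\delta(T)$, the point being that this closes only when $\delta(T)$ is small enough that the quadratic feedback cannot overturn the inequality --- this is the smallness hypothesis. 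Feeding the bound back into the linear parabolic system satisfied by $(h,\theta)$ and invoking $W^{2,1}_2$-regularity for the Stokes and the heat operator upgrades it to a $V^1_2(\Omega^T)$, hence $W^{2,1}_2(\Omega^T)$, estimate for $(h,\theta)$, still controlled by $\delta(T)$.

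For (ii), once $\norm{v_{,x_3}}$ and $\norm{\omega_{,x_3}}$ are small and bounded, the three-dimensional interpolation inequalities collapse to two-dimensional ones with the top-order factor carrying the small constant: e.g. $\norm{v}_{L_6(\Omega)}$ or $\norm{v}_{L_\infty(\Omega)}$ is estimated through $\norm{v}_{H^1(\Omega)}$ and $\norm{v_{,x_3}}$, so that $v\cdot\nabla v$ in the momentum equation and $v\cdot\nabla\omega$ in the micropolar equation act as subcritical perturbations. Running the energy estimate for $(v,\omega)$ --- testing with $(v,\omega)$ and then with $(\Rot\Rot v,-\triangle\omega)$, using $v\cdot n=0$, $\Rot v\times n=0$ to kill the boundary terms --- yields a $V^1_2(\Omega^T)$ bound, and maximal regularity for the Stokes problem with these boundary conditions (and for the heat equation with the $S_1,S_2$ conditions) then gives the $W^{2,1}_2$-estimate together with $\norm{\nabla p}_{L_2(\Omega^T)}$, with right-hand side polynomial in $E_{v,\omega}(T)$, $E_{h,\theta}(T)$ and $\delta(T)$; by \eqref{eq14}--\eqref{eq26} this is majorized by the cube of the bracket in the statement.

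For (iii), the hypotheses $\norm{f(t)}_{L_2(\Omega)}\le\norm{f(kT)}_{L_2(\Omega)}e^{-(t-kT)}$ (and the analogues for $g$, $f_{,x_3}$, $g_{,x_3}$) together with $\sup_k\{\,\cdots\,\}<\infty$ keep the forcing contribution over each window $(kT,(k+1)T)$ uniformly bounded and prevent it from accumulating; since step (i) bounds the new initial norms $\norm{\Rot h(kT)}$, $\norm{h(kT)}$, $\norm{\theta(kT)}$ by $C\delta(kT)$ with a constant that can be kept $\le 1$ once $\delta$ is small, and $T$ is fixed large enough for this bookkeeping to work, the smallness of $\delta(kT)$ is self-reproducing, the local solution never breaks down, and taking the supremum over $k$ of the window estimates from (ii) gives the asserted global bound on $(0,\infty)$. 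The main obstacle is closing the nonlinear energy inequality of step (i) with constants sharp enough that the smallness of $\delta$ is genuinely transported from one window to the next rather than degrading; once that is done, everything else is linear maximal-regularity theory plus lengthy but routine anisotropic interpolation.
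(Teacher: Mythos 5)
Your outline cannot be checked against a proof in this paper, because Theorem \ref{thm0} is not proved here: it is imported verbatim from \cite{2012arXiv1205.4507N}, and the present article only uses its conclusion. That said, your strategy --- differentiate \eqref{p1} in $x_3$ (legitimate in the cylinder because the lateral and top/bottom boundary conditions, together with $f_3\vert_{S_2}=0$, $g'\vert_{S_2}=0$, are preserved), close a small-data energy estimate for $(h,\theta)=(v_{,x_3},\omega_{,x_3})$ in terms of $\delta$, use that smallness to tame the convective terms for the full $(v,\omega)$ via anisotropic interpolation and Stokes/parabolic maximal regularity, and propagate window by window over $[kT,(k+1)T]$ using the exponential decay of the forcing --- is precisely the scheme the cited work follows, as one can read off from the auxiliary quantities $E_{h,\theta}$, $\delta(t)$ in \eqref{eq14}--\eqref{eq26} and from the lemmas of \cite{2012arXiv1205.4507N} and \cite{2012arXiv1205.4046N} invoked elsewhere in this paper, so your proposal is consistent with the intended proof.
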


\section{Main results}\label{sec4}

The main result of this paper reads:
\begin{thm}\label{thm1}
	Let the assumption from Theorem \ref{thm0} be satisfied. Suppose that
	\begin{equation*}
		\norm{u\big((k + 1)T + t\big) - u(kT + t)}_{L_2(\Omega)} < \epsilon, 
	\end{equation*}
	for any $t \in [0,T]$ and $k \in \mathbb{N}$, where $u$ is any element of the set $\{f,g,f_{,x_3},g_{,x_3}\}$. Then, the family of semi-processes $\{U_\sigma(t,\tau)\colon t\geq\tau\geq 0\}$, $\sigma \in \Sigma^{\epsilon}_{\sigma_0}$ corresponding to problem \eqref{p1} has an uniform attractor $\mathcal{A}_{\Sigma^{\epsilon}_{\sigma_0}}$ which coincides with the uniform attractor $\mathcal{A}_{\omega(\Sigma^{\epsilon}_{\sigma_0})}$ of the family of semiprocesses $\{U_\sigma(t,\tau)\colon t\geq\tau\geq 0\}$, $\sigma \in \omega\left(\Sigma^{\epsilon}_{\sigma_0}\right)$, i.e.
	\begin{equation*}
		\mathcal{A}_{\Sigma^{\epsilon}_{\sigma_0}} = \mathcal{A}_{\omega\big(\Sigma^{\epsilon}_{\sigma_0}\big)}.
	\end{equation*}
\end{thm}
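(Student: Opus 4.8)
The plan is to follow the general framework for uniform attractors of families of semi-processes (as developed in \cite{che} and adapted to the restricted setting of \cite{chol1}), with the crucial point being that the symbol space here is not arbitrary but the ``$\epsilon$-restricted'' family $\Sigma^{\epsilon}_{\sigma_0}$ together with its $\omega$-limit set $\omega(\Sigma^{\epsilon}_{\sigma_0})$. First I would recall/verify the three structural ingredients that the abstract theorem requires: (i) each individual semi-process $\{U_\sigma(t,\tau)\}$ is well defined on the phase space (here a bounded subset of $\widehat V$ cut out by the smallness condition $\delta(T)$ small), which follows from the global regular solvability in Theorem \ref{thm0}; (ii) the family is \emph{uniformly (w.r.t. $\sigma\in\Sigma^{\epsilon}_{\sigma_0}$) asymptotically compact} and possesses a uniformly absorbing set $B_0$ — this is exactly where the dissipative estimate of Theorem \ref{thm0} and the exponential decay hypotheses on $f,g,f_{,x_3},g_{,x_3}$ enter, giving an absorbing ball in $\widehat V$ whose radius is controlled uniformly over the symbol space; and (iii) the translation family is closed and the map $(t,\sigma,\widehat u_0)\mapsto U_\sigma(t,0)\widehat u_0$ enjoys the requisite continuity (weak-to-weak or, on the absorbing set, strong) in the symbol, which follows from continuous dependence of solutions of \eqref{p1} on the data $f,g$.

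Next I would set up the symbol space itself. The symbol is the quadruple of translated external data $\sigma(s) = (f(\cdot+s),g(\cdot+s),f_{,x_3}(\cdot+s),g_{,x_3}(\cdot+s))$; $\Sigma^{\epsilon}_{\sigma_0}$ is the hull of $\sigma_0$ restricted to translates that respect the $\epsilon$-slow-variation condition $\norm{u((k+1)T+t)-u(kT+t)}_{L_2(\Omega)}<\epsilon$. I would show that $\Sigma^{\epsilon}_{\sigma_0}$ is translation-invariant (the slow-variation condition is preserved under the shift semigroup $\{T(h)\}_{h\ge 0}$ along the $T$-grid) and that its closure in the appropriate local topology (e.g. $L^2_{loc}(\mathbb R_+;L_{6/5}(\Omega))$, or weak-$\ast$) is compact — here the exponential bounds and the uniform sup-bound $\sup_k\{f(kT),\dots\}<\infty$ from Theorem \ref{thm0} give the needed precompactness. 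Then $\omega(\Sigma^{\epsilon}_{\sigma_0}) := \bigcap_{h\ge 0}\overline{\bigcup_{s\ge h}T(s)\Sigma^{\epsilon}_{\sigma_0}}$ is a nonempty, compact, strictly invariant subset of $\Sigma^{\epsilon}_{\sigma_0}$ attracting it in that topology.

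The heart of the argument is the identity $\mathcal{A}_{\Sigma^{\epsilon}_{\sigma_0}} = \mathcal{A}_{\omega(\Sigma^{\epsilon}_{\sigma_0})}$. The inclusion $\mathcal{A}_{\omega(\Sigma^{\epsilon}_{\sigma_0})}\subseteq \mathcal{A}_{\Sigma^{\epsilon}_{\sigma_0}}$ is the monotonicity of the attractor under enlargement of the symbol space together with $\omega(\Sigma^{\epsilon}_{\sigma_0})\subseteq \overline{\Sigma^{\epsilon}_{\sigma_0}}$. For the reverse inclusion I would use the standard representation of the uniform attractor as the union of the kernel sections, $\mathcal{A}_{\Sigma} = \bigcup_{\sigma\in\Sigma}\mathcal{K}_\sigma(0)$ (kernels = bounded complete trajectories), and exploit that every complete bounded trajectory ``in the past'' is driven by a symbol lying in $\omega(\Sigma^{\epsilon}_{\sigma_0})$: shifting a complete trajectory of $U_\sigma$ backwards produces trajectories of $U_{T(s)\sigma}$, and letting $s\to\infty$ the symbols accumulate precisely on $\omega(\Sigma^{\epsilon}_{\sigma_0})$, while the trajectories (staying in the compact absorbing set) converge to a complete bounded trajectory of a semi-process driven by an $\omega$-limit symbol. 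Passing to the limit in the weak formulation of \eqref{p1} — using asymptotic compactness to upgrade weak to strong convergence where the nonlinear terms $v\cdot\nabla v$, $v\cdot\nabla\omega$, $\Rot\omega$, $\Rot v$ are concerned — identifies the limit as a genuine solution, hence its initial section lies in $\mathcal{K}_{\sigma'}(0)$ for some $\sigma'\in\omega(\Sigma^{\epsilon}_{\sigma_0})$. This gives $\mathcal{A}_{\Sigma^{\epsilon}_{\sigma_0}}\subseteq \mathcal{A}_{\omega(\Sigma^{\epsilon}_{\sigma_0})}$ and closes the equality.

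The main obstacle I expect is twofold and both parts concern the restricted nature of the phase space. First, one must check that the restricted uniform absorbing set $B_0$ is genuinely contained in the admissible phase space — i.e. that the smallness condition $\delta(T)\ll 1$ is \emph{propagated}: the absorbing set inherited from Theorem \ref{thm0} must again satisfy the hypothesis that makes solutions regular, so that $U_\sigma$ acts on $B_0$; this is the place where the estimate of Theorem \ref{thm0}, which is cubic in the data, must be shown to be consistent/closed on $B_0$, and it forces $\epsilon$ and $\delta(T)$ to be taken small together. Second, the passage to the limit for the nonlinear convective term in three dimensions is delicate precisely because weak solutions are not known to be regular in general — this is circumvented here only because on $B_0$ we work with the \emph{regular} solutions provided by Theorem \ref{thm0}, so I would be careful to argue throughout within the class of regular solutions and to verify that the limiting trajectory is again regular (using the uniform $W^{2,1}_2(\Omega^\infty)$-bound), rather than merely a weak (Leray-type) solution.
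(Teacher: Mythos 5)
Your overall architecture coincides with the paper's: the proof is an application of the abstract reduction principle for semi-processes (Proposition \ref{prop1}), and the work consists in verifying its hypotheses --- uniform absorbing sets in $\widehat H^{\epsilon}$ and $\widehat V^{\epsilon}$ (Lemmas \ref{lem33} and \ref{lem34}, giving uniform asymptotic compactness via the compact embedding $\widehat V\hookrightarrow\widehat H$), the $(\widehat H^{\epsilon}\times\Sigma^{\epsilon}_{\sigma_0},\widehat H^{\epsilon})$-continuity (Lemma \ref{lem35}), and the compactness of the symbol space. Your third paragraph re-derives the identity $\mathcal{A}_{\Sigma^{\epsilon}_{\sigma_0}}=\mathcal{A}_{\omega(\Sigma^{\epsilon}_{\sigma_0})}$ through kernel sections; the paper simply imports this from \cite{che}, so your version is more self-contained but not where the paper's effort actually lies. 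Your closing remarks on propagating the smallness condition on the restricted phase space are legitimate and correspond to the paper's introduction of $\widehat H^{\epsilon}$ and $\widehat V^{\epsilon}$.

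The genuine gap is in your treatment of translation compactness. You assert that the closure of the set of translates of $\sigma_0$ in a local topology such as $L_{2,loc}(\mathbb{R}_+;\Psi)$, or weak-$\ast$, is compact ``because of the exponential bounds and the uniform sup-bound''. Boundedness does not give precompactness in the strong local topology, and weak-$\ast$ compactness is not sufficient here: the continuity Lemma \ref{lem35} controls $U_{\sigma_1}-U_{\sigma_2}$ by the strong norms $\norm{F}_{L_2(\tau,t;L_{\frac{6}{5}}(\Omega))}$ and $\norm{G}_{L_2(\tau,t;L_{\frac{6}{5}}(\Omega))}$ of the difference of symbols, so the scheme requires the hull to be compact in a strong (Stepanow) metric. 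Supplying that compactness is precisely the role of the hypothesis $\norm{u((k+1)T+t)-u(kT+t)}_{L_2(\Omega)}<\epsilon$, which you instead read as the defining restriction of $\Sigma^{\epsilon}_{\sigma_0}$ (in the paper that restriction concerns the smallness of $f_{,x_3},g_{,x_3}$, i.e.\ the set $\omega(\sigma^{\epsilon}_{0,x_3})$). The paper's route (Remark \ref{rem8}) is: the $\epsilon$-closeness of the data on consecutive periods $[kT,(k+1)T]$ makes $\sigma_0$ $S^2$-asymptotically almost periodic, hence $S^2$-normal by Proposition \ref{prop3}, hence $\mathcal{H}_+(\sigma_0)$ is compact in $\Xi_+=L^S_{2,loc}(\mathbb{R}^+,\Psi)$. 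Without this step the central new hypothesis of the theorem is never actually used, and the compactness claim, as you justify it, fails.
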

For explanatory notation we refer to Section \ref{sec5}.

The second result deals with problem \eqref{p1} when the external data are time independent and the viscosity $\nu$ is large enough. It is expected that in such a case the solutions tend to stationary solutions.

\begin{thm}[convergence to stationary solutions]\label{thm3}
	Suppose that the external data $f$ and $g$ do not depend on time. Then, there exists a positive constant $\nu_*$ such that for all $\nu > \nu_*$ the stationary problem 
	\begin{equation*}
		\begin{aligned}
			&v_{\infty}\cdot \nabla v_{\infty} - (\nu + \nu_r)\triangle v_{\infty} + \nabla p_{\infty} = 2\nu_r\Rot\omega_{\infty} + f   & &\text{in } \Omega,\\
			&\Div v_{\infty} = 0 & &\text{in } \Omega,\\
			&\begin{aligned}
				&v_{\infty}\cdot \nabla \omega_{\infty} - \alpha \triangle \omega_{\infty} - \beta\nabla\Div\omega_{\infty} + 4\nu_r\omega_{\infty} \\
				&\mspace{60mu} = 2\nu_r\Rot v_{\infty} + g
			\end{aligned}& &\text{in } \Omega, \\
			&v_{\infty}\cdot n = 0 & &\text{on $S$}, \\
			&\Rot v_{\infty} \times n = 0  & &\text{on $S$}, \\
			&\omega_{\infty} = 0 & &\text{on $S_1$}, \\
			&\omega'_{\infty} = 0, \qquad \omega_{\infty 3,x_3} = 0 & &\text{on $S_2$}
    		\end{aligned}
	\end{equation*}
	has a unique solution $(v_{\infty},\omega_{\infty})$ for which
	\begin{equation}\label{eq43}
		\norm{v_\infty}_{H^2(\Omega)} + \norm{p_{\infty}}_{H^1(\Omega)} + \norm{\omega_{\infty}}_{H^2(\Omega)} \leq F\big(\norm{f}_{L_2(\Omega)},\norm{g}_{L_2(\Omega)}\big)
	\end{equation}
	holds, where $F\colon[0,\infty)\times[0,\infty]\to[0,\infty)$, $F(0,0) = 0$ is a continuous and increasing function. Moreover, under assumptions of Theorem \ref{thm1} there exists a unique solution $(v(t),\omega(t))$ to problem \eqref{p1} converging to the the stationary solution $(v_{\infty},\omega_{\infty})$ as $t\to\infty$ and satysfying
	\begin{equation*}
		\norm{v(t) - v_{\infty}}^2_{L_2(\Omega)} + \norm{\omega(t) - \omega_{\infty}}^2_{L_2(\Omega)} \leq \left(\norm{v(0) - v_{\infty}}^2_{L_2(\Omega)} + \norm{\omega(0) - \omega_{\infty}}^2_{L_2(\Omega)}\right)e^{-\Delta(\nu)t}, 
	\end{equation*}
	where
	\begin{equation*}
		\Delta(\nu) = c_1(\nu) - \frac{3c_2}{\nu}F^4\big(\norm{f_\infty}_{L_2(\Omega)},\norm{g_\infty}_{L_2(\Omega)}\big)
	\end{equation*}
	and
	\begin{equation*}
		\begin{aligned}
			c_1(\nu) &= \frac{\min\{\nu,\alpha\}}{c_{\Omega}}, \\
			c_2 &= \frac{c_{I,\Omega}}{\min\{1,\alpha\}}.
		\end{aligned}
	\end{equation*}
\end{thm}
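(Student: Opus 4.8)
The plan is to establish the statement in two stages: first construct the stationary solution $(v_\infty,\omega_\infty,p_\infty)$ and prove \eqref{eq43}, then compare it with the global regular solution $(v(t),\omega(t))$ of \eqref{p1} furnished by Theorem \ref{thm0}.

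For the stationary problem I would use the weak formulation in $\widehat V=V\times H^1(\Omega)$, encoding the mixed boundary conditions exactly as for the evolution system, and solve it by a Galerkin approximation (equivalently, a Leray--Schauder argument). The basic \emph{a priori} bound comes from testing the approximate equations with $(v_\infty,\omega_\infty)$: the convective terms drop out since $\Div v_\infty=0$ and $v_\infty\cdot n=0$ on $S$; the two gyration--rotation couplings $2\nu_r\Rot\omega_\infty$ and $2\nu_r\Rot v_\infty$ combine, after integration by parts, with the dissipative terms $(\nu+\nu_r)\norm{\nabla v_\infty}_{L_2}^2$, $\alpha\norm{\nabla\omega_\infty}_{L_2}^2$, $\beta\norm{\Div\omega_\infty}_{L_2}^2$ and $4\nu_r\norm{\omega_\infty}_{L_2}^2$ in the usual micropolar manner; and the forcing is absorbed via $\abs{\int f\cdot v_\infty}\le\norm{f}_{L_{6/5}}\norm{v_\infty}_{L_6}\le\varepsilon\norm{\nabla v_\infty}_{L_2}^2+C_\varepsilon\norm{f}_{L_{6/5}}^2$ and likewise for $g$. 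This gives $\norm{v_\infty}_{H^1}+\norm{\omega_\infty}_{H^1}\le C(\norm{f}_{L_2},\norm{g}_{L_2})$, and passage to the limit in the nonlinear terms is justified by the compact embedding $H^1(\Omega)\hookrightarrow L_4(\Omega)$. Uniqueness for $\nu>\nu_*$ follows by testing the equation for the difference $w$ of two stationary velocity fields with $w$: all terms cancel except $\int w\cdot\nabla v_\infty\cdot w$, which is bounded in modulus by $c\norm{\nabla v_\infty}_{L_2}\norm{w}_{L_4}^2\le c\norm{\nabla v_\infty}_{L_2}\norm{\nabla w}_{L_2}^2$ and is absorbed once $\norm{\nabla v_\infty}_{L_2}$ is small relative to $\nu$. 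Finally, \eqref{eq43} is obtained by a bootstrap: reading the first equation as a stationary Stokes system with data $2\nu_r\Rot\omega_\infty+f-v_\infty\cdot\nabla v_\infty$ and the third as an elliptic system for $\omega_\infty$ with data $2\nu_r\Rot v_\infty+g-v_\infty\cdot\nabla\omega_\infty$, one starts from the $H^1$-bound (so these data lie in $L_{3/2}$), applies the regularity theory for these boundary-value problems to gain $W^2_{3/2}$, and iterates until the convective terms lie in $L_2$, whence $v_\infty,\omega_\infty\in H^2(\Omega)$; the resulting estimate is a continuous increasing function $F$ of $\norm{f}_{L_2},\norm{g}_{L_2}$, and $F(0,0)=0$ because $f=g=0$ forces the trivial solution through the energy identity.

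For the convergence I would put $\bar v=v-v_\infty$, $\bar\omega=\omega-\omega_\infty$, $\bar p=p-p_\infty$ and subtract the stationary system from \eqref{p1}. Testing the equation for $\bar v$ with $\bar v$, the one for $\bar\omega$ with $\bar\omega$, and adding, the convective contributions reduce — using $\Div\bar v=\Div v_\infty=0$ and $\bar v\cdot n=0$ on $S$ — to $\int\bar v\cdot\nabla v_\infty\cdot\bar v$ and $\int\bar v\cdot\nabla\omega_\infty\cdot\bar\omega$, the couplings combine with the dissipation as above, and, after discarding nonnegative terms, one reaches
\begin{equation*}
	\tfrac12\frac{d}{dt}\bigl(\norm{\bar v}_{L_2}^2+\norm{\bar\omega}_{L_2}^2\bigr)+\nu\norm{\nabla\bar v}_{L_2}^2+\alpha\norm{\nabla\bar\omega}_{L_2}^2\le\abs{\textstyle\int\bar v\cdot\nabla v_\infty\cdot\bar v}+\abs{\textstyle\int\bar v\cdot\nabla\omega_\infty\cdot\bar\omega}.
\end{equation*}
Each term on the right is estimated by H\"older's inequality, the embedding $H^2(\Omega)\hookrightarrow L_\infty(\Omega)$ (so that $\norm{v_\infty}_{L_\infty}\le c_{I,\Omega}\norm{v_\infty}_{H^2}$, and similarly for $\omega_\infty$) and Young's inequality, which yields $\tfrac\nu2\norm{\nabla\bar v}_{L_2}^2+\tfrac\alpha2\norm{\nabla\bar\omega}_{L_2}^2$ plus a lower-order contribution that, once \eqref{eq43} is inserted, is bounded by $\tfrac{3c_2}{\nu}F^4\bigl(\norm{f}_{L_2},\norm{g}_{L_2}\bigr)\bigl(\norm{\bar v}_{L_2}^2+\norm{\bar\omega}_{L_2}^2\bigr)$. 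Absorbing the gradient terms on the left and applying the Poincar\'e inequality $\norm{\nabla\bar v}_{L_2}^2\ge c_\Omega^{-1}\norm{\bar v}_{L_2}^2$ — together with its analogue for $\bar\omega$, legitimate since $\bar\omega$ vanishes on $S_1$ — one obtains $\frac{d}{dt}y+\Delta(\nu)y\le0$ with $y=\norm{\bar v}_{L_2}^2+\norm{\bar\omega}_{L_2}^2$ and $\Delta(\nu)$ as in the statement; Gronwall's lemma then yields the claimed exponential decay, and $\Delta(\nu)>0$ for $\nu$ large because $c_1(\nu)\to\alpha/c_\Omega>0$ while $\tfrac{3c_2}{\nu}F^4\to0$ as $\nu\to\infty$.

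The step I expect to be the main obstacle is in the stationary part: confirming that the boundary-value problems for $v_\infty$ and $\omega_\infty$ with the conditions $v_\infty\cdot n=0$, $\Rot v_\infty\times n=0$ on $S$ and the split conditions on $S_1,S_2$ genuinely admit the $W^2_p$-regularity used in the bootstrap (the same machinery as for \eqref{p1}, but the compatibility at the edge $S_1\cap S_2$ must be respected), and checking that a single $\nu_*$ can be chosen ensuring at once the uniqueness of $(v_\infty,\omega_\infty)$ and the positivity of $\Delta(\nu)$. The other delicate point is the constant bookkeeping in the final energy estimate: the coupling terms, the surplus viscosity $\nu_r\norm{\nabla\bar v}_{L_2}^2$ and the damping $4\nu_r\norm{\bar\omega}_{L_2}^2$ must be spent against the gyration coupling so that precisely $c_1(\nu)=\min\{\nu,\alpha\}/c_\Omega$ and $c_2=c_{I,\Omega}/\min\{1,\alpha\}$ emerge.
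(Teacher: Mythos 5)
Your proposal is correct and, for the part of the theorem that the paper actually proves in detail --- the exponential convergence of $(v(t),\omega(t))$ to $(v_\infty,\omega_\infty)$ --- it follows essentially the same route: subtract the stationary system, test with the difference, kill the transport terms via $\Div=0$ and $v\cdot n=0$, absorb the gyration coupling $4\nu_r\int\Rot V\cdot\Theta$ into the surplus dissipation $\nu_r\norm{\Rot V}^2_{L_2}+4\nu_r\norm{\Theta}^2_{L_2}$, control $\norm{V}^2_{H^1}+\norm{\Theta}^2_{H^1}$ from below by the $\Rot$/$\Div$ terms, and conclude by Gronwall. The genuine difference is in the stationary part: the paper does not prove existence, uniqueness or the estimate \eqref{eq43} at all --- it cites \cite{luk0} and \cite[Ch.~2, \S 1]{luk1} ``after slight modifications'' --- whereas you sketch a self-contained Galerkin construction with an elliptic bootstrap to $H^2$; that buys a proof that actually addresses the nonstandard boundary conditions ($\Rot v_\infty\times n=0$ on $S$, the split conditions on $S_1$, $S_2$), which is exactly where the cited results need ``modification,'' at the price of having to verify the $W^2_p$-regularity near the edge $S_1\cap S_2$, which you correctly flag as the main obstacle. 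One bookkeeping discrepancy: to reproduce the exact rate $\Delta(\nu)=c_1(\nu)-\frac{3c_2}{\nu}F^4$ you should estimate $\int V\cdot\nabla v_\infty\cdot V$ and $\int V\cdot\nabla\omega_\infty\cdot\Theta$ as the paper does, via the interpolations $\norm{u}_{L_4}\le c_I\norm{u}_{L_2}^{1/4}\norm{u}_{H^1}^{3/4}$ and $\norm{u}_{L_3}\le c_I\norm{u}_{L_2}^{1/2}\norm{u}_{H^1}^{1/2}$, which after Young produce the fourth powers $\norm{v_\infty}_{H^1}^4$ and $\norm{\omega_\infty}_{H^1}^4$; your stated use of $H^2(\Omega)\hookrightarrow L_\infty(\Omega)$ would more naturally yield $F^2$ in place of $F^4$ --- harmless for the qualitative conclusion $\Delta(\nu)>0$ for large $\nu$, but not the constant advertised in the statement.
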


The last result of this articles establishes certain relation between the trajectories of the standard Navier-Stokes equations and micropolar equations. 

\begin{thm}\label{thm2}
	Let the pair $(u,\Theta)$ be a solution to the following initial-boundary value problem 
	\begin{equation}\label{p11}
		\begin{aligned}
			&u_{,t} - \nu\triangle u + (u\cdot \nabla) u + \nabla q = f & &\text{in $\Omega^t$}, \\
			&\Div u = 0 & &\text{in $\Omega^t$}, \\
			&\Theta_{,t} - \alpha \triangle \Theta - \beta \nabla \Div\Theta + (u\cdot \nabla)\Theta = g & &\text{in $\Omega^t$}, \\
			&\Rot u \times n = 0 & &\text{on $S^t$}, \\
			&u \cdot n = 0 & &\text{on $S^t$}, \\
			&\Theta = 0 & &\text{on $S_1^t$}, \\
			&\Theta' = 0, \qquad \Theta_{3,x_3} = 0 & &\text{on $S_2^t$}, \\
			&u\vert_{t = t_0} = u(t_0), \qquad \Theta\vert_{t = t_0} = \Theta(t_0) & &\text{in $\Omega\times\{t = t_0\}$}.
		\end{aligned}
	\end{equation}
	Suppose that $\nu > 0$ is sufficiently large. Finally, let the assumptions of Theorem \ref{thm1} hold. Then, for any $(u(t_0),\Theta(t_0)) \in B_{(v(t_0),\omega(t_0))}(R)$ (i.e. ball centered at $(v(t_0),\omega(t_0))$ with radius $R$), where $R > 0$, there exists $t^* = t^*(R)$ such that for all $t \geq t^*$ the trajectory $(v(t),\omega(t))$ lies in the $\epsilon$-neighborhood of the trajectory $(u(t),\Theta(t))$.
\end{thm}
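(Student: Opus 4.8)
The plan is to estimate the difference of the two flows by a single Gronwall inequality in $L_2$. First I would observe that problem \eqref{p11} is exactly \eqref{p1} with $\nu_r=0$; the smallness quantity $\delta(T)$ of \eqref{eq26} is not worsened by dropping the coupling terms, so Theorem~\ref{thm0} applies to $(u,\Theta)$ as well, and both $(v,\omega)$ and $(u,\Theta)$ are global, unique and regular. Combining the bound of Theorem~\ref{thm0} with the parabolic embedding $W^{2,1}_2\hookrightarrow C_b\big([t_0,\infty);H^1(\Omega)\big)$ then yields finite quantities
\[
 M_1 := \sup_{t\ge t_0}\Big(\norm{v(t)}^2_{H^1(\Omega)}+\norm{\omega(t)}^2_{H^1(\Omega)}+\norm{u(t)}^2_{H^1(\Omega)}\Big),\qquad
 M_2 := \norm{v}^2_{L_2(t_0,\infty;H^2(\Omega))}+\norm{\omega}^2_{L_2(t_0,\infty;H^2(\Omega))},
\]
which --- as the structure of the constants in Theorem~\ref{thm3} already indicates --- may be taken independent of $\nu$ once $\nu$ is large.

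Next I would put $w:=v-u$, $\eta:=\omega-\Theta$, $\pi:=p-q$ and subtract \eqref{p11} from \eqref{p1}. Using $v\cdot\nabla v-u\cdot\nabla u=w\cdot\nabla v+u\cdot\nabla w$, the analogous identity for the $\omega$-equation, and $-(\nu+\nu_r)\triangle v+\nu\triangle u=-(\nu+\nu_r)\triangle w-\nu_r\triangle u$, the pair $(w,\eta)$ is seen to satisfy $\Div w=0$, the homogeneous boundary conditions of \eqref{p1}, the initial data $w(t_0)=v(t_0)-u(t_0)$, $\eta(t_0)=\omega(t_0)-\Theta(t_0)$, and
\[
 w_{,t}+w\cdot\nabla v+u\cdot\nabla w-(\nu+\nu_r)\triangle w+\nabla\pi=2\nu_r\Rot\omega+\nu_r\triangle u,
\]
\[
 \eta_{,t}+w\cdot\nabla\omega+u\cdot\nabla\eta-\alpha\triangle\eta-\beta\nabla\Div\eta+4\nu_r\omega=2\nu_r\Rot v.
\]
I would then test the first equation with $w$, the second with $\eta$, integrate over $\Omega$ and add. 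Since $\Div u=0$, $u\cdot n\vert_S=0$ and $\Div w=0$, $w\cdot n\vert_S=0$, the terms $\int_\Omega u\cdot\nabla w\cdot w$, $\int_\Omega u\cdot\nabla\eta\cdot\eta$ and $\int_\Omega\nabla\pi\cdot w$ drop out, and the dissipative terms produce $(\nu+\nu_r)\norm{\nabla w}^2_{L_2(\Omega)}+\alpha\norm{\nabla\eta}^2_{L_2(\Omega)}+\beta\norm{\Div\eta}^2_{L_2(\Omega)}$, up to boundary integrals treated exactly as in \cite{2012arXiv1205.4046N}, the boundary conditions here being identical. The transport terms are controlled by $\abs{\int_\Omega w\cdot\nabla v\cdot w}\le c\,\norm{v}_{H^2(\Omega)}\norm{w}_{L_2(\Omega)}\norm{\nabla w}_{L_2(\Omega)}$ and its analogue for $\int_\Omega w\cdot\nabla\omega\cdot\eta$, and absorbed into the dissipation; what is left is a term whose coefficient is $\tfrac{c}{\nu}\big(\norm{v(t)}^2_{H^2(\Omega)}+\norm{\omega(t)}^2_{H^2(\Omega)}\big)$, integrable in $t$ thanks to $M_2$. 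Finally the four coupling terms --- all of order $\nu_r$ --- are bounded after integration by parts (e.g. $2\nu_r\int_\Omega\Rot\omega\cdot w=2\nu_r\int_\Omega\omega\cdot\Rot w+(\text{bdry})$ and $\nu_r\int_\Omega\triangle u\cdot w=-\nu_r\int_\Omega\nabla u:\nabla w+(\text{bdry})$, the boundary contributions being controlled by the slip conditions), and Young's inequality turns them into $\tfrac14 c_0(\nu)\big(\norm{w}^2_{L_2(\Omega)}+\norm{\eta}^2_{L_2(\Omega)}\big)+C\nu_r^2 M_1$ with $c_0(\nu):=\min\{\nu+\nu_r,\alpha\}/c_\Omega$. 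Writing $y(t):=\norm{w(t)}^2_{L_2(\Omega)}+\norm{\eta(t)}^2_{L_2(\Omega)}$, one arrives, for $\nu$ large, at
\[
 \frac{d}{dt}y(t)\le\Big(-c_0(\nu)+\tfrac{c}{\nu}\big(\norm{v(t)}^2_{H^2(\Omega)}+\norm{\omega(t)}^2_{H^2(\Omega)}\big)\Big)\,y(t)+C\nu_r^2 M_1 .
\]

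The final step is Gronwall's lemma with an integrable coefficient: since $\int_{t_0}^\infty\big(\norm{v}^2_{H^2(\Omega)}+\norm{\omega}^2_{H^2(\Omega)}\big)\ud t=M_2<\infty$, we obtain
\[
 y(t)\le y(t_0)\,e^{cM_2/\nu}\,e^{-c_0(\nu)(t-t_0)}+\frac{C\,e^{cM_2/\nu}\,\nu_r^2 M_1}{c_0(\nu)}\qquad(t\ge t_0).
\]
If $(u(t_0),\Theta(t_0))\in B_{(v(t_0),\omega(t_0))}(R)$ then $y(t_0)\le R^2$, so choosing $t^*=t^*(R)$ with $R^2 e^{cM_2/\nu}e^{-c_0(\nu)(t^*-t_0)}<\tfrac12\epsilon^2$ gives, for every $t\ge t^*$,
\[
 \norm{v(t)-u(t)}^2_{L_2(\Omega)}+\norm{\omega(t)-\Theta(t)}^2_{L_2(\Omega)}=y(t)<\tfrac12\epsilon^2+\frac{C\,e^{cM_2/\nu}\,\nu_r^2 M_1}{c_0(\nu)} .
\]
Since the difference system is forced only by terms of order $\nu_r$, the residual term is $<\tfrac12\epsilon^2$ precisely in the regime $\nu$ large, $\nu_r/\alpha$ small (the $\nu_r\approx 0$ regime of the Introduction); hence $y(t)<\epsilon^2$ for all $t\ge t^*$, i.e. $(v(t),\omega(t))$ lies in the $\epsilon$-neighborhood of $(u(t),\Theta(t))$.

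The main obstacle I foresee is the middle step: one must absorb the Oseen terms $w\cdot\nabla v$ and $w\cdot\nabla\omega$ without destroying the positivity of the effective decay rate, which requires the $\nu$-uniform a priori bounds $M_1,M_2$ of Theorem~\ref{thm0} together with a positivity condition of the same flavour as $\Delta(\nu)>0$ in Theorem~\ref{thm3}. A lesser, purely technical point is the handling of the boundary integrals generated when $\triangle u$ and $\Rot\omega$ are integrated by parts against $w$ under the mixed slip/Dirichlet conditions, but these coincide with boundary terms already analysed in \cite{2012arXiv1205.4046N}.
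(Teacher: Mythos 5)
Your argument is correct and follows the same overall strategy as the paper: subtract the two systems, test with the difference, kill the pressure and the pure transport terms using $\Div=0$ and the slip condition, absorb the remaining convective terms into the dissipation, and close with Gronwall, ending with an exponentially decaying contribution from the initial discrepancy plus a non-decaying residual of order $\nu_r^2$. The one genuine difference is the splitting of the nonlinearity: you write $v\cdot\nabla v-u\cdot\nabla u=w\cdot\nabla v+u\cdot\nabla w$, so the surviving Oseen terms carry $\nabla v$ and $\nabla\omega$ and you only need the a priori regularity of the micropolar solution, which Theorem \ref{thm0} supplies directly (your $M_1$, $M_2$, and a Gronwall with integrable coefficient $\tfrac{c}{\nu}\norm{v}^2_{H^2(\Omega)}$). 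The paper splits the other way, $v\cdot\nabla V+V\cdot\nabla u$, so its decay rate $\Delta(\nu)$ in \eqref{eq44} involves $\sup_t\norm{u}^4_{H^1(\Omega)}$ and $\sup_t\norm{\nabla\Theta}^2_{L_3(\Omega)}$, and it must separately establish regularity of $(u,\Theta)$ by citing \cite{wm6} for \eqref{p10} and bootstrapping $\Theta$ through maximal parabolic regularity; your route avoids that extra work, at the price of asserting (plausibly, but without the paper's citation) that Theorem \ref{thm0} covers the $\nu_r=0$ system too. Finally, you make explicit what the paper leaves to its closing remark: the residual term is only $O(\nu_r^2)$ times bounded quantities, so the claimed $\epsilon$-closeness for all $t\ge t^*$ genuinely requires the $\nu_r\approx 0$ regime in addition to $\nu$ large; flagging this is a point in your favour rather than a gap.
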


\section{Basics of semiprocesses}\label{sec5}

We begin with recalling a few facts from \cite[Ch. 1]{chol} and \cite[Ch. 2]{che}.

Let $\{T(t)\}$ be a semigroup acting on a complete metric or Banach space $X$. Denote by $\mathcal{B}(X)$ the set of all bounded sets in $X$ with respect to metric in $X$. We say that $P \subset X$ is an \emph{attracting set} for $\{T(t)\}$ if for any $B \in \mathcal{B}(X)$
\begin{equation*}
	\dist_X\big(T(t)B,P\big)\xrightarrow[t\to\infty]{} 0.
\end{equation*}

Now we may define an attractor:
\begin{defi}
	A set $\mathcal{A}$ is called \emph{global attractor} for the semigroup $\{T(t)\}$, if it satisfies:
	\begin{itemize}
		\item $\mathcal{A}$ is compact in $X$,
		\item $\mathcal{A}$ is an attracting set for $\{T(t)\}$,
		\item $\mathcal{A}$ is strictly invariant with respect to $\{T(t)\}$, i.e. $T(t)\mathcal{A} = \mathcal{A}$ for all $t \geq 0$.
	\end{itemize}
\end{defi}

\begin{defi}
	For any $B \in \mathcal{B}(X)$ the set
	\begin{equation*}
		\omega(B) = \bigcap_{\tau \geq 0} \overline{\bigcup_{t\geq \tau}T(t) B}^{X}
	\end{equation*}
	is called an \emph{$\omega$-limit set} for $B$.
\end{defi}

The existence of the global attractor in ensured by the following Proposition:
\begin{prop}\label{prop2}
	Suppose that $\{T(t)\}$ is a continuous semigroup in a complete metric space $X$ having a compact attracting set $K \Subset X$. Then the semigroup $\{T(t)\}$ has a global attractor $\mathcal{A}$ ($\mathcal{A} \Subset K$). The attractor coincides with $\omega(K)$, i.e. $\mathcal{A} = \omega(K)$. 
\end{prop}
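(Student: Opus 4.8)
The plan is to set $\mathcal{A} := \omega(K)$ and to verify the three defining properties of a global attractor together with uniqueness. Throughout I would use the elementary sequential description of the $\omega$-limit set, valid in any metric space: $y\in\omega(K)$ if and only if there exist $t_n\to\infty$ and $k_n\in K$ with $T(t_n)k_n\to y$. This follows at once from the definition ($y$ lies in $\overline{\bigcup_{t\ge\tau}T(t)K}$ for every $\tau$, so for each $n$ one finds $T(t_n)k_n$ with $t_n\ge n$ within $1/n$ of $y$, and conversely).

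\emph{Compactness and non-emptiness.} Since $K$ attracts itself, for $y=\lim T(t_n)k_n\in\omega(K)$ we get $\dist_X(y,K)=\lim\dist_X(T(t_n)k_n,K)=0$, hence $y\in\overline{K}=K$; thus $\omega(K)\subseteq K$. As $\omega(K)$ is also an intersection of closed sets, it is a closed subset of the compact set $K$, hence compact, and $\mathcal{A}\Subset K$. For non-emptiness pick any $x\in K$ and $t_n\to\infty$; from $\dist_X(T(t_n)x,K)\to 0$ choose $z_n\in K$ with $d(T(t_n)x,z_n)\to 0$, pass to a subsequence with $z_n\to z\in K$ by compactness of $K$, so $T(t_n)x\to z$ and $z\in\omega(K)$.

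\emph{Attraction.} I would first show $\mathcal{A}$ attracts $K$: if not, there are $\delta>0$, $x_n\in K$, $t_n\to\infty$ with $\dist_X(T(t_n)x_n,\mathcal{A})\ge\delta$; since $K$ attracts $K$, the same extraction as above produces a subsequence along which $T(t_n)x_n\to z\in K$, and the sequential description forces $z\in\omega(K)=\mathcal{A}$, contradicting $\dist_X(z,\mathcal{A})\ge\delta$. Now let $B\in\mathcal{B}(X)$ and suppose $\dist_X(T(t_n)x_n,\mathcal{A})\ge\delta$ for some $x_n\in B$, $t_n\to\infty$. Fix $s$ so large that $\dist_X(T(s)K,\mathcal{A})<\delta$, which is possible because $\mathcal{A}$ attracts $K$. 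For $n$ with $t_n>s$ write $T(t_n)x_n=T(s)\big(T(t_n-s)x_n\big)$; since $K$ attracts $B$ and $t_n-s\to\infty$, choose $w_n\in K$ with $d(T(t_n-s)x_n,w_n)\to 0$ and pass to a subsequence with $w_n\to w\in K$. Then $T(t_n-s)x_n\to w$, so continuity of the single map $T(s)$ gives $T(t_n)x_n\to T(s)w$ along this subsequence, whence $\dist_X(T(s)w,\mathcal{A})\ge\delta$; but $w\in K$ forces $\dist_X(T(s)w,\mathcal{A})\le\dist_X(T(s)K,\mathcal{A})<\delta$, a contradiction. Hence $\mathcal{A}$ is an attracting set.

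\emph{Strict invariance and uniqueness.} For $y=\lim T(t_n)k_n\in\mathcal{A}$ and $t\ge 0$, the semigroup property and continuity of $T(t)$ give $T(t)y=\lim T(t+t_n)k_n\in\omega(K)=\mathcal{A}$, so $T(t)\mathcal{A}\subseteq\mathcal{A}$. For the reverse inclusion — the one place where compactness of $K$ (not merely that it attracts) is genuinely used — observe $\dist_X(T(t_n-t)k_n,K)\to 0$, so picking $u_n\in K$ close to $T(t_n-t)k_n$ and passing to a subsequence $u_n\to u\in K$ yields $T(t_n-t)k_n\to u\in\omega(K)=\mathcal{A}$ and $T(t)u=\lim T(t_n)k_n=y$; thus $y\in T(t)\mathcal{A}$ and $T(t)\mathcal{A}=\mathcal{A}$. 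Finally, if $\mathcal{A}'$ is any global attractor it is bounded and invariant, so $\dist_X(\mathcal{A}',\mathcal{A})=\dist_X(T(t)\mathcal{A}',\mathcal{A})\to 0$ gives $\mathcal{A}'\subseteq\mathcal{A}$, and symmetrically $\mathcal{A}\subseteq\mathcal{A}'$; hence the global attractor is unique and equals $\omega(K)$. The only mildly delicate point is the two-step ``flow-further'' argument in the attraction step, where one must combine $K$-attraction of $B$ with $\mathcal{A}$-attraction of $K$ while invoking only continuity of the individual maps $T(s)$; everything else is routine compactness bookkeeping.
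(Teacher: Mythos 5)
Your proof is correct. Note that the paper does not actually prove Proposition \ref{prop2}: it only cites \cite[Ch.~2, \S 3, Theorem~3.1]{che}, so your write-up supplies a self-contained argument where the paper gives none. Your route is essentially the standard Chepyzhov--Vishik/Temam proof: the sequential characterization of $\omega(K)$, the inclusion $\omega(K)\subseteq K$ from $K$ attracting itself, and the two-step ``flow-further'' decomposition $T(t_n)x_n=T(s)\bigl(T(t_n-s)x_n\bigr)$ to upgrade attraction of $K$ to attraction of arbitrary bounded sets. All the delicate points are handled: you correctly isolate where compactness of $K$ is genuinely needed (surjectivity in $T(t)\mathcal{A}=\mathcal{A}$ and the extraction of convergent subsequences in the attraction step), and you correctly use only continuity of the individual maps $T(s)$ rather than joint continuity in $(t,x)$, which is all the hypothesis ``continuous semigroup'' guarantees here. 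The minimality/uniqueness argument via invariance of a competing attractor $\mathcal{A}'$ is also sound, and it is worth having since the paper's definition of global attractor does not itself include minimality. The only cosmetic quibble is that in the attraction step you should take the threshold $\dist_X(T(s)K,\mathcal{A})<\delta$ with, say, $\delta/2$ if one worries about the supremum attaining the value $\delta$; this changes nothing of substance.
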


\begin{proof}
	We refer the reader to \cite[Ch. 2, \S 3, Theorem 3.1]{che}.
\end{proof}

To give the proof of Theorem \ref{thm1} in the next Section we introduce the notions and concept of semi-processes. For this purpose we make use of \cite[Ch. $7$]{che}.

Let us rewrite equation \eqref{p1}$_1$ in the abstract form
\begin{equation}\label{eq37}
	(v_{,t},\omega_{,t}) = A(v,\omega,t) = A_{\sigma(t)}(v,\omega), \qquad t\in \mathbb{R}^+,
\end{equation}
where the right-hand side depends directly on the time-dependent forces and momenta, which is indicated by the presence of function $\sigma(t) = (f(t),g(t))$. The function $\sigma(t)$ is referred as the \emph{time symbol} (or the \emph{symbol}).

By $\Psi$ we denote a Banach space, which contains the values of $\sigma(t)$ for almost all $t \in \mathbb{R}_+$. In our case $\Psi = L_{\frac{6}{5}}(\Omega)\cap L_2(\Omega)\times L_{\frac{6}{5}}(\Omega)\cap L_2(\Omega)$ (although we could write $L_2(\Omega)\times L_2(\Omega)$ only since $\Omega$ is bounded but we want to point out that in certain cases a weaker assumption on forces can be imposed). Moreover we assume that $\sigma(t)$, as a function of $t$, belongs to a topological space $\Xi_+ := \{\xi(t), t \in\mathbb{R}_+\colon \xi(t) \in \Psi \ \text{for a.e. $t\in\mathbb{R}_+$}\}$. It is tempting to write $\Xi_+ = L_{2,loc}(0,\infty;\Psi)$ but we must take into account certain restrictions, which were imposed on the data (see Theorem \ref{thm1}). Thus, we describe $\Xi_+$ in greater detail below.

Consider then the family of equations of the form of \eqref{eq37}, where $\sigma(t) \in \Sigma \subseteq \Xi_+$. The space $\Sigma$ is called \emph{symbol space} and is assumed to contain, along with $\sigma(t)$, all translations, i.e. $\sigma(t + s) = T(s)\sigma(t) \in \Sigma$ for all $s\geq 0$, where $T(s)$ is a translation operator acting on $\Xi_+$. Furthermore, we fix $\sigma_0(t)$, $t \geq 0$. Consider the closure in $\Xi_+$ of the set
\begin{equation*}
	\left\{T(s)\sigma_0(t)\colon s \geq 0\right\} = \left\{\sigma_0(t + s)\colon s\geq 0\right\}.
\end{equation*}
We call this closure the \emph{hull} of the symbol $\sigma_0(t)$ and denote by $\mathcal{H}_+(\sigma_0)$,
\begin{equation*}
	\mathcal{H}_+(\sigma_0) = \overline{\left\{T(s)\sigma_0\colon s \geq 0\right\}}^{\Xi_+}.
\end{equation*}
\begin{defi}
	The symbol $\sigma_0(t) \in \Xi_+$ is called \emph{translation compact} in $\Xi_+$ if the hull $\mathcal{H}_+(\sigma_0)$ is compact in $\Xi_+$.
\end{defi}

In view of the above definition we would like to set $\Sigma = \mathcal{H}_+(\sigma_0)$. However for this purpose we need to determine how to choose $\Sigma$ to ensure its compactness in $\Xi_+$. This is crucial for the proof of the existence of the uniform attractor because we use:
\begin{prop}\label{prop1}
	Let $\sigma_0(s)$ be a translation compact function in $\Xi_+$. Let the family of semi-processes $\{U(t,\tau)\colon t\geq \tau\geq 0\}$, $\sigma \in \mathcal{H}_+(\sigma_0)$ be uniformly asymptotically compact and $\big(E\times \mathcal{H}_+(\sigma_0),E\big)$-continuous. Let $\omega(\mathcal{H}_+(\sigma_0))$ be the attractor of the translation semi-group $\{T(t)\}$ acting on $\mathcal{H}_+(\sigma_0)$. Consider the corresponding family of semi-processes $\{U_\sigma(t,\tau)\colon t\geq\tau\geq 0\}$, $\sigma \in \mathcal{H}_+(\sigma_0)$. Then, the uniform attractor $\mathcal{A}_{\mathcal{H}_+(\sigma_0)}$ of the family of semi-processes $\{U_\sigma(t,\tau)\colon t\geq\tau\geq 0\}$, $\sigma \in \mathcal{H}_+(\sigma_0)$ exists and coincides with the uniform attractor $\mathcal{A}_{\omega(\mathcal{H}_+(\sigma_0))}$ of the family of semi-processes $\{U_\sigma(t,\tau)\colon t\geq\tau\geq 0\}$, $\sigma \in \omega(\mathcal{H}_+(\sigma_0))$, i.e.
	\begin{equation*}
		\mathcal{A}_{\mathcal{H}_+(\sigma_0)} = \mathcal{A}_{\omega(\mathcal{H}_+(\sigma_0))}.
	\end{equation*}
\end{prop}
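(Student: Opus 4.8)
The plan is to reduce the assertion to the general theory of uniform attractors for families of semi-processes in \cite[Ch. 7]{che}. First I would check that the hypotheses of that theory are met by \emph{both} symbol spaces $\Sigma = \mathcal{H}_+(\sigma_0)$ and $\Sigma = \omega(\mathcal{H}_+(\sigma_0))$, and then derive the equality by a double inclusion resting on the minimality of the uniform attractor. Since $\sigma_0$ is translation compact, $\mathcal{H}_+(\sigma_0)$ is compact in $\Xi_+$ and positively invariant under the translation semigroup $\{T(t)\}$; by Proposition \ref{prop2} the set $\omega(\mathcal{H}_+(\sigma_0))$ is then the global attractor of $\{T(t)\}$ restricted to $\mathcal{H}_+(\sigma_0)$, hence a compact and \emph{strictly} $\{T(t)\}$-invariant subset of it. The uniform asymptotic compactness and the $(E\times\mathcal{H}_+(\sigma_0),E)$-continuity assumed for the family $\{U_\sigma\}$ pass to the subfamily indexed by $\omega(\mathcal{H}_+(\sigma_0))$. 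Consequently \cite[Ch. 7]{che} applies to each of the two compact, translation-invariant symbol spaces and yields: the uniform attractor $\mathcal{A}_\Sigma$ — the minimal compact set attracting $U_\sigma(t,0)B$ as $t\to\infty$, uniformly in $\sigma\in\Sigma$, for every bounded $B\subset E$ — a bounded uniformly absorbing set, and the representation of $\mathcal{A}_\Sigma$ as the union over $\sigma\in\Sigma$ of the time-$0$ sections of the kernels of the semi-processes $U_\sigma$.

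The inclusion $\mathcal{A}_{\omega(\mathcal{H}_+(\sigma_0))}\subseteq\mathcal{A}_{\mathcal{H}_+(\sigma_0)}$ is then immediate: $\mathcal{A}_{\mathcal{H}_+(\sigma_0)}$ is compact and attracts $U_\sigma(t,0)B$ uniformly over $\sigma\in\mathcal{H}_+(\sigma_0)$, a fortiori over the smaller set $\omega(\mathcal{H}_+(\sigma_0))$, so by minimality of $\mathcal{A}_{\omega(\mathcal{H}_+(\sigma_0))}$ the inclusion follows. For the reverse inclusion it suffices, again by minimality — this time of $\mathcal{A}_{\mathcal{H}_+(\sigma_0)}$ — to show that the compact set $\mathcal{A}_{\omega(\mathcal{H}_+(\sigma_0))}$ attracts $U_\sigma(t,0)B$ uniformly over the \emph{full} symbol space $\mathcal{H}_+(\sigma_0)$, for every bounded $B$.

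To prove this last claim I would argue by contradiction. If it failed, there would exist $\eta_0>0$, $\tau_n\to\infty$, $\sigma_n\in\mathcal{H}_+(\sigma_0)$ and $b_n\in B$ with $\dist_E\big(U_{\sigma_n}(\tau_n,0)b_n,\mathcal{A}_{\omega(\mathcal{H}_+(\sigma_0))}\big)\ge\eta_0$, and by uniform asymptotic compactness, along a subsequence, $U_{\sigma_n}(\tau_n,0)b_n\to\xi$ in $E$ with $\dist_E(\xi,\mathcal{A}_{\omega(\mathcal{H}_+(\sigma_0))})\ge\eta_0$. For each integer $m\ge1$ and $\tau_n>m$ one has
\begin{equation*}
	U_{\sigma_n}(\tau_n,0)b_n = U_{T(\tau_n-m)\sigma_n}(m,0)\big(U_{\sigma_n}(\tau_n-m,0)b_n\big).
\end{equation*}
As $\tau_n-m\to\infty$, uniform asymptotic compactness provides a subsequential limit $\eta_m$ of $U_{\sigma_n}(\tau_n-m,0)b_n$ (which, being absorbed, lies in a fixed bounded set), while $T(\tau_n-m)\sigma_n$ accumulates — by the very definition of the $\omega$-limit set — at some $\widehat{\sigma}_m\in\omega(\mathcal{H}_+(\sigma_0))$, and a diagonal extraction lets one arrange $T(m)\widehat{\sigma}_m=\widehat{\sigma}_0$ for all $m$. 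Passing to the limit in the displayed identity via the $(E\times\mathcal{H}_+(\sigma_0),E)$-continuity on the bounded interval $[0,m]$ gives $\xi=U_{\widehat{\sigma}_m}(m,0)\eta_m$ for every $m$. These relations, made consistent by the diagonal choice and glued by means of the strict $\{T(t)\}$-invariance of $\omega(\mathcal{H}_+(\sigma_0))$ (which permits a backward extension of symbols inside $\omega(\mathcal{H}_+(\sigma_0))$), produce a bounded complete trajectory of the family $\{U_\sigma\}_{\sigma\in\omega(\mathcal{H}_+(\sigma_0))}$ whose value at time $0$ is $\xi$. By the kernel representation of $\mathcal{A}_{\omega(\mathcal{H}_+(\sigma_0))}$ recalled above, this forces $\xi\in\mathcal{A}_{\omega(\mathcal{H}_+(\sigma_0))}$, contradicting $\dist_E(\xi,\mathcal{A}_{\omega(\mathcal{H}_+(\sigma_0))})\ge\eta_0$. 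Hence $\mathcal{A}_{\mathcal{H}_+(\sigma_0)}\subseteq\mathcal{A}_{\omega(\mathcal{H}_+(\sigma_0))}$, and with the first inclusion we conclude $\mathcal{A}_{\mathcal{H}_+(\sigma_0)}=\mathcal{A}_{\omega(\mathcal{H}_+(\sigma_0))}$.

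The main obstacle will be this last step. In the decomposition above the shift $\tau_n-m$ tends to infinity together with $\tau_n$, so the continuity hypothesis — effective only on bounded time intervals — cannot be applied to the whole orbit at once; it is exactly the uniform asymptotic compactness that allows one to freeze the tail of the evolution, extract limiting states and a limit symbol in $\omega(\mathcal{H}_+(\sigma_0))$, and — using the strict translation-invariance of $\omega(\mathcal{H}_+(\sigma_0))$ to extend symbols backward in a consistent way — to reconstruct a complete bounded trajectory and thereby identify $\xi$ as an element of $\mathcal{A}_{\omega(\mathcal{H}_+(\sigma_0))}$ via its kernel description.
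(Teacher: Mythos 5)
The paper offers no proof of Proposition \ref{prop1}; it simply refers the reader to \cite[Ch.~7, \S 3]{che}, and your argument is a correct reconstruction of precisely that standard proof: the easy inclusion $\mathcal{A}_{\omega(\mathcal{H}_+(\sigma_0))}\subseteq\mathcal{A}_{\mathcal{H}_+(\sigma_0)}$ by minimality, and the reverse inclusion by contradiction using the decomposition $U_{\sigma_n}(\tau_n,0)=U_{T(\tau_n-m)\sigma_n}(m,0)U_{\sigma_n}(\tau_n-m,0)$, uniform asymptotic compactness, diagonal extraction of limit symbols in the strictly invariant set $\omega(\mathcal{H}_+(\sigma_0))$, and the kernel representation of $\mathcal{A}_{\omega(\mathcal{H}_+(\sigma_0))}$. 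The only point worth flagging is that the translation identity $U_{T(s)\sigma}(t,\tau)=U_\sigma(t+s,\tau+s)$, on which your decomposition rests, is part of the standard definition of a family of semi-processes generated by a family of equations in \cite{che} but is not among the axioms $(\mathbf{U})_1$--$(\mathbf{U})_2$ that the paper actually lists, so you should state it explicitly as a hypothesis on the family.
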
 

\begin{proof}
	For the proof we refer the reader to \cite[Ch. 7, \S 3]{che}.
\end{proof}

The compactness of $\Sigma$ will be established in the framework of almost periodic functions and Stepanow spaces. We need a few more definitions (see \cite{Guter:1966ys} and \cite[Ch. 7, \S 5]{che}):
\begin{defi}
	We call the expression
	\begin{equation*}
		\ud_{S^p_l}\big(f(s),g(s)\big) = \sup_{t \geq 0} \left(\frac{1}{l}\int_{t}^{t + l}\norm{f(s) - g(s)}_{\Psi}^p\, \ud s \right)^{\frac{1}{p}}
	\end{equation*}
	the $S^p_l$-distance of the order $p \geq 1$ corresponding to the length $l$. The space of all $p$-power locally integrable functions with values in $\Psi$ and equipped with the norm generated by $\ud_{S^p_l}$ we call the \emph{Stepanow space} and denote by $L_{p}^S(\mathbb{R}^+;\Psi)$.
\end{defi}

In view of the above definition we set
\begin{equation*}
	\Xi_+ = L^S_{2,loc}(\mathbb{R}^+,\Psi).
\end{equation*}
Also note, that in the above definition we can put $l = 1$. Thus, we will write $S^p$ instead of $S^p_l$ or $S^p_1$. In addition, all $S_l^p$-spaces are complete. 

\begin{defi}
	A function $f \in L_p^S(\mathbb{R}^+;\Psi)$ we call \emph{$S^p$-asymptotically almost periodic} if for any $\epsilon \geq 0$ there exists a number $l = l(\epsilon)$ such that for each interval $(\alpha,\alpha + l)$, $\alpha \geq 0$ there exists a point $\tau \in (\alpha,\alpha + l)$ such that
	\begin{equation*}
		\ud_{S^p}\big(f(s + \tau),f(s)\big) = \sup_{t\geq 0} \left(\int_{t}^{t + 1} \norm{f(s + \tau) - f(s)}^p_{\Psi}\, \ud t\right)^{\frac{1}{p}} < \epsilon.
	\end{equation*}
\end{defi}

\begin{defi}

	We say that a function $f\in L_p^S(\mathbb{R}^+;\Psi)$ is 
	\emph{$S^p$-normal} if the family of translations
	\begin{equation*}
		\left\{ f^h(s) = f(h + s)\colon h \in \mathbb{R}^+\right\}
	\end{equation*}
	is precompact in $L_p^S(\mathbb{R}^+;\Psi)$ with respect to the norm 
	\begin{equation*}
		\left(\sup_{t \geq 0} \int_t^{t + 1} \norm{f(s)}^p_{\Psi}\, \ud s\right)^{\frac{1}{p}}
	\end{equation*}
	induced by $\ud_{S^p}$.
\end{defi}

From our point of view the following Proposition will play a crucial role:
\begin{prop}\label{prop3}
	A function $f$ is $S^p$-normal if and only if $f$ is $S^p$-asymptotically almost periodic. 
\end{prop}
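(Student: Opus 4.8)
The statement is the Stepanov-space version of Bochner's normality criterion for almost periodic functions, and I would prove it along the classical lines, the two directions being of quite different character. Two elementary facts about the metric $\ud_{S^p}$ do most of the work. First, \emph{forward} translation is non-expansive: $\ud_{S^p}(f^{a+c},f^{b+c})\le \ud_{S^p}(f^a,f^b)$ for $c\ge 0$, since replacing the base point $s$ by $s+c$ only shrinks the supremum $\sup_{t\ge 0}$ defining $\ud_{S^p}$. Second, $L^S_p(\mathbb{R}^+;\Psi)$ is complete, so for the orbit $\{f^h:h\ge 0\}$ \emph{precompact} is the same as \emph{totally bounded}. I would also use the classical (ordinary) $L_p$-translation continuity on bounded intervals.

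\emph{($S^p$-asymptotically almost periodic $\Rightarrow$ $S^p$-normal.)} Fix $\epsilon>0$ and let $l=l(\epsilon/2)$ be as in the definition. The crux is that the restricted orbit $\{f^r:0\le r\le l\}$ is totally bounded in $L^S_p$ — and here the almost-periodicity hypothesis must genuinely be used, since for a general element of $L^S_p$ even arbitrarily short translates can fail to approximate one another. I would establish this by contradiction: if $r\mapsto f^r$ were not uniformly continuous on $[0,\infty)$, one could produce $\delta_n\to 0^+$ and base points $t_n$ — necessarily $t_n\to\infty$, the bounded case being ruled out by ordinary $L_p$-translation continuity — with $\int_{t_n}^{t_n+1}\norm{f(s+\delta_n)-f(s)}_\Psi^p\,\ud s$ bounded below; choosing a suitable almost period $\tau_n$ in a window lying to the left of $t_n$ and using $f(\cdot+\tau_n)\approx f(\cdot)$ to transport the estimate to a bounded range of $s$ contradicts that same continuity. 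Granting this, every $f^h$ with $h\ge l$ lies within $\epsilon/2$ of the restricted orbit: take an $(\epsilon/2)$-almost period $\tau\in(h-l,h)$, so that $\ud_{S^p}(f^h,f^{h-\tau})=\ud_{S^p}\big((f^\tau)^{h-\tau},f^{h-\tau}\big)\le \ud_{S^p}(f^\tau,f)<\epsilon/2$ by non-expansiveness, with $h-\tau\in(0,l)$. Thus $\{f^h:h\ge 0\}$ admits a finite $\epsilon$-net; letting $\epsilon\to 0$ and invoking completeness gives precompactness.

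\emph{($S^p$-normal $\Rightarrow$ $S^p$-asymptotically almost periodic.)} This is the more delicate direction, and the asymptotic (one-sided) nature is the main obstacle: forward translation being only non-expansive and not isometric, the usual one-step pigeonhole that extracts relatively dense almost periods from a finite net of translates breaks down. I would instead use precompactness to extract, along any sequence $h_n\to\infty$, an $S^p$-convergent subsequence of $\{f^{h_n}\}$ and, by a further compactness argument over the full orbit, isolate a limiting profile that is a genuinely (whole-line) almost periodic function; for that profile $\ud_{S^p}$ is translation-invariant, so the classical Bochner pigeonhole — a finite $\epsilon/3$-net of its two-sided orbit together with a pigeonhole over $N+1$ integer-spaced points in any interval of length $\ge N$ — yields a relatively dense set of $(2\epsilon/3)$-almost periods, which one then transfers back to $f$. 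Constructing and controlling this almost periodic profile is the step I expect to require the most care; the rest is the classical two-sided Bochner theorem together with routine manipulation of the Stepanov metric. Since Proposition~\ref{prop3} is essentially classical, an acceptable alternative is simply to cite \cite[Ch.~7, \S5]{che} and \cite{Guter:1966ys}.
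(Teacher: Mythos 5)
The paper gives no proof of this proposition at all: it is stated as a classical fact, with the references \cite{Guter:1966ys} and \cite[Ch.~7, \S 5]{che} attached to the surrounding block of definitions. So your fallback option --- simply citing those two sources --- coincides exactly with what the paper does, and is sufficient here.

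Your attempted direct proof is, however, uneven between the two directions. The implication from $S^p$-asymptotic almost periodicity to $S^p$-normality is sound as sketched: the non-expansiveness of forward translation under $\ud_{S^p}$ is correct (the substitution $s \mapsto s+c$ turns $\sup_{t\ge 0}$ into $\sup_{t\ge c}$), the reduction of the whole orbit to the piece $\{f^r\colon 0\le r\le l\}$ via an almost period $\tau\in(h-l,h)$ is the standard argument, and the continuity of $r\mapsto f^r$ on $[0,l]$ (hence total boundedness of that piece) does follow from the three-term triangle inequality you describe, transporting the base interval back to a bounded range by an almost period and invoking ordinary $L_p$-continuity of translation there. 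The converse direction, by contrast, is a plan rather than a proof: the phrase about isolating ``a limiting profile that is a genuinely (whole-line) almost periodic function'' names precisely the hard content of the theorem without supplying it. What is actually needed is the Fr\'echet-type decomposition $f=g+\phi$ with $g$ Bohr--Stepanov almost periodic on all of $\mathbb{R}$ and the $S^p$-mean of $\phi$ over $(t,t+1)$ tending to $0$ as $t\to\infty$; one must show that the subsequential $S^p$-limits of $f^{h_n}$, $h_n\to\infty$, cohere into a single such $g$ (rather than a family of mutually inequivalent limit profiles), that its two-sided orbit is precompact so Bochner's two-sided theorem applies, and that its relatively dense almost periods transfer back to $f$. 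None of this is routine, and your sketch leaves all of it open. Since you explicitly flag the difficulty and offer the citation as an alternative, the proposal is acceptable, but the second implication should not be represented as essentially complete.
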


Now we define
\begin{equation*}
	\mathcal{H}_+(\sigma_0) = \overline{\left\{\sigma^h_0(s) = \sigma_0(s + h)\colon h \geq 0\right\}}^{\Xi_+}.
\end{equation*}
Note, that the set $\mathcal{H}_+(\sigma_0)$ does not contain any information on the smallness assumption on the external data. Therefore, for any given $\sigma_0$ we 
additionally introduce 
\begin{equation*}
	\sigma^{\epsilon}_{0,x_3} = \big(f_{0,x_3},g_{0,x_3}\big),
\end{equation*}
where 
\begin{equation*}
	\norm{f_{0,x_3}}_{L_2(\Omega^t)}^2 + \norm{g_{0,x_3}}_{L_2(\Omega^t)}^2 < \epsilon
\end{equation*}
and define
\begin{equation*}
	\omega(\sigma_{0,x_3}^{\epsilon}) = \bigcap_{\tau \geq 0} \overline{\left\{\sigma_{0,x_3}^{\epsilon,h}(s)\colon h \geq \tau\right\}}^{\Xi_+}.
\end{equation*}
Then we set
\begin{equation*}
	\Sigma^\epsilon_{\sigma_0} = \mathcal{H}_+(\sigma_0) \cap \omega(\sigma_{0,x_3}^{\epsilon}).
\end{equation*}

\begin{rem}\label{rem8}
	In view of the above Proposition, the hull $\mathcal{H}_+(\sigma_0)$ is compact if we take $\sigma_0$ $S^2$-asymptotically almost periodic. But recall that we obtained the global existence of regular solutions under the assumption on the external data that they decay exponentially on every time interval of the form $[kT,(k + 1)T]$ (see Theorem \ref{thm0}). Thus, if we assume that
	\begin{equation*}
		\norm{f\big((k + 1)T + t\big) - f(kT + t)}_{\Psi} < \epsilon
	\end{equation*}
	(and so for $g$ and their derivatives with respect to $x_3$) then the external data become $S^2$-asympto\-tically almost periodic. In other words, we take almost the same external data on every time interval of the form $[kT,(k + 1)T]$. On the contrary, in the proof of the global existence of regular solutions, the external data could differ from interval to interval without any restrictions. 
\end{rem}

We can finally introduce the following Definition:
\begin{defi}
	Let $E$ be a Banach space (or complete metric or a closed subset of $E$). Let a two parameter family of operators 
	\begin{equation*}
		\left\{U_{\sigma}(t,\tau)\colon t\geq\tau\geq 0\right\}, \qquad U_{\sigma}(t,\tau)\colon E\to E
	\end{equation*}
	be given. We call it \emph{semi-process} in $E$ with the time symbol $\sigma \in \Sigma$ if
	\begin{description}
		\item[$(\mathbf{U})_1$] $U_{\sigma}(t,\tau) = U_{\sigma}(t,s)U_{\sigma}(s,\tau)$ for all $t\geq s\geq \tau \geq 0$, 
		\item[$(\mathbf{U})_2$] $U_{\sigma}(\tau,\tau) = \Id$ for all $\tau \geq 0$.
	\end{description}
\end{defi}

Observe that if we had a global and unique solution to problem \eqref{p1}, we could associate it with certain semi-process $\left\{U_{\sigma}(t,\tau)\colon t\geq\tau\geq 0\right\}$ defined on $\widehat H$. In three dimensions, likewise for the standard Navier-Stokes equations, we only know that such a semi-process would exist on $[0,t_{\max}]$, where $t_{\max} = t_{\max}(v_0,\omega_0,f,g)$. 

Following the idea from \cite[\S 2.3]{chol1} we can define a semi-process on certain subspace of $\widehat H$. Let $B^{\epsilon} \in \widehat H$ be such that
\begin{equation*}
	B^{\epsilon} = \left\{\big(v(0),\omega(0)\big) \in \widehat H\cap \widehat V\colon \norm{v(0)_{,x_3}}_{L_2(\Omega)}^2 + \norm{\Rot v(0)_{,x_3}}_{L_2(\Omega)}^2 + \norm{\omega(0)_{,x_3}}_{L_2(\Omega)}^2 < \epsilon\right\}.
\end{equation*}
Define
\begin{equation*}
	\omega_{\tau,\Sigma^{\epsilon}_{\sigma_0}}(B^{\epsilon}) := \bigcap_{t\geq\tau}\overline{\bigcup_{\sigma \in \Sigma^{\epsilon}_{\sigma_0}}\bigcup_{s\geq t}U_{\sigma_0}(s,\tau)B^{\epsilon}}^{\widehat H}
\end{equation*}
and introduce the sets
\begin{equation*}
	\begin{aligned}
		&\widehat H^{\epsilon} = \widehat H\cap \omega_{\tau,\Sigma^{\epsilon}_{\sigma_0}}(B^{\epsilon}), \\
		&\widehat V^{\epsilon} = \widehat V\cap \omega_{\tau,\Sigma^{\epsilon}_{\sigma_0}}(B^{\epsilon}).
	\end{aligned}
\end{equation*}

\begin{defi}
	The family of semiprocesses $\left\{U_{\sigma}(t,\tau)\right\}_{t\geq \tau\geq 0}$, $\sigma \in \Sigma^{\epsilon}_{\sigma_0}$ is said to be $(\widehat H^{\epsilon}\times \Sigma^{\epsilon}_{\sigma_0},\widehat H^{\epsilon})$-continuous if for any fixed $t,\tau \in \mathbb{R}_+$, $t \geq \tau$ the mapping $\big((v,\omega),\sigma\big)\mapsto U_{\sigma}(t,\tau)(v,\omega)$ is continuous from $\widehat H^{\epsilon}\times \Sigma^{\epsilon}_{\sigma_0}$ to $\widehat H^{\epsilon}$.
\end{defi}

Let us justify the $(\widehat H^{\epsilon}\times \Sigma^{\epsilon}_{\sigma_0},\widehat H^{\epsilon})$-continuity of $U_{\sigma}(v,\omega)_{t\geq\tau\geq 0}$. We have
\begin{lem}\label{lem35}
	The family $\left\{U_{\sigma}(t,\tau)\right\}_{t\geq\tau\geq 0}$, $\sigma \in \Sigma^{\epsilon}_{\sigma_0}$ of semiprocesses is $(\widehat H^{\epsilon}\times \Sigma^{\epsilon}_{\sigma_0},\widehat H^{\epsilon})$-continuous.
\end{lem}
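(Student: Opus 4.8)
The plan is to prove the continuity the classical way: derive a stability estimate for the difference of two solutions and use that, on $\widehat H^{\epsilon}$ and with symbols from $\Sigma^{\epsilon}_{\sigma_0}$, everything entering the Gronwall exponent is under uniform control. Fix $t\geq\tau\geq 0$ and, for $i=1,2$, let $(v^{(i)},\omega^{(i)},p^{(i)})$ be the solution of \eqref{p1} on $(\tau,\infty)$ with symbol $\sigma_i=(f^{(i)},g^{(i)})\in\Sigma^{\epsilon}_{\sigma_0}$ and data $(v^{(i)}(\tau),\omega^{(i)}(\tau))\in\widehat H^{\epsilon}$. The first point I would settle is that Theorem \ref{thm0} is genuinely applicable here: the smallness of $\delta(T)$ is built into the definition of $\Sigma^{\epsilon}_{\sigma_0}$ and of $B^{\epsilon}$, and $\widehat H^{\epsilon}$ lies in the $\widehat H$-closure of a family bounded in $\widehat V$ by the uniform bound in Theorem \ref{thm0} (a ball of $\widehat V$ is $\widehat H$-closed), so the $H^1$-norms of the data, hence the whole right-hand side of that bound, stay bounded by a constant $M$ independent of the chosen pair. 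Thus both solutions are global and regular with $\norm{v^{(i)}}_{W^{2,1}_2(\Omega^{\infty})}+\norm{\omega^{(i)}}_{W^{2,1}_2(\Omega^{\infty})}\leq M$.

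Next I would set $V=v^{(1)}-v^{(2)}$, $W=\omega^{(1)}-\omega^{(2)}$, $P=p^{(1)}-p^{(2)}$, $F=f^{(1)}-f^{(2)}$, $G=g^{(1)}-g^{(2)}$ and subtract the two copies of \eqref{p1}, obtaining
\[
V_{,t}+v^{(1)}\cdot\nabla V+V\cdot\nabla v^{(2)}-(\nu+\nu_r)\triangle V+\nabla P=2\nu_r\Rot W+F,\qquad\Div V=0,
\]
\[
W_{,t}+v^{(1)}\cdot\nabla W+V\cdot\nabla\omega^{(2)}-\alpha\triangle W-\beta\nabla\Div W+4\nu_r W=2\nu_r\Rot V+G,
\]
with the homogeneous versions of the boundary conditions of \eqref{p1}. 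I would then test the first equation with $V$ and the third with $W$, integrate over $\Omega$ and integrate by parts: the boundary terms are controlled exactly as in the a priori estimates of \cite{2012arXiv1205.4046N}, the pressure term drops since $\Div V=0$ and $V\cdot n\vert_S=0$, and the $v^{(1)}$-transport terms vanish since $\Div v^{(1)}=0$. The remaining nonlinear terms $\int_{\Omega}(V\cdot\nabla v^{(2)})\cdot V$ and $\int_{\Omega}(V\cdot\nabla\omega^{(2)})\cdot W$ I would estimate with the three-dimensional interpolation $\norm{u}_{L_4(\Omega)}\leq c\norm{u}^{1/4}_{L_2(\Omega)}\norm{u}^{3/4}_{H^1(\Omega)}$ and Young's inequality, the coupling terms $2\nu_r\int_{\Omega}(\Rot W\cdot V+\Rot V\cdot W)$ by absorption into the dissipation, and the forcing $\int_{\Omega}(F\cdot V+G\cdot W)$ by $L_{\frac{6}{5}}$--$L_6$ duality together with $H^1(\Omega)\hookrightarrow L_6(\Omega)$. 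This should lead to
\[
\Dt\big(\norm{V}^2_{L_2(\Omega)}+\norm{W}^2_{L_2(\Omega)}\big)\leq\gamma(t)\big(\norm{V}^2_{L_2(\Omega)}+\norm{W}^2_{L_2(\Omega)}\big)+c\big(\norm{F(t)}^2_{L_{\frac{6}{5}}(\Omega)}+\norm{G(t)}^2_{L_{\frac{6}{5}}(\Omega)}\big),
\]
with $\gamma(t)=c\big(1+\norm{\nabla v^{(2)}(t)}^4_{L_2(\Omega)}+\norm{\nabla\omega^{(2)}(t)}^4_{L_2(\Omega)}\big)$.

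Because $W^{2,1}_2(\Omega^{\infty})\hookrightarrow C\big([\tau,\infty);H^1(\Omega)\big)$, the bound $M$ gives $\gamma\in L_{\infty}(\tau,\infty)$ with a bound depending only on $M$, hence $\int_{\tau}^{t}\gamma\leq C_*(t-\tau)<\infty$; Gronwall's lemma then produces
\[
\norm{V(t)}^2_{L_2(\Omega)}+\norm{W(t)}^2_{L_2(\Omega)}\leq e^{C_*(t-\tau)}\Big(\norm{V(\tau)}^2_{L_2(\Omega)}+\norm{W(\tau)}^2_{L_2(\Omega)}+c\int_{\tau}^{t}\norm{F(s)}^2_{L_{\frac{6}{5}}(\Omega)}+\norm{G(s)}^2_{L_{\frac{6}{5}}(\Omega)}\,\ud s\Big),
\]
and the right-hand side is bounded by the $\widehat H$-distance squared of the two data plus $\lceil t-\tau\rceil\,\ud_{S^2}(\sigma_1,\sigma_2)^2$, i.e.\ by the $\Xi_+$-distance of the symbols. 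Sending the second datum and symbol to the first then yields $U_{\sigma_2}(t,\tau)(v^{(2)}(\tau),\omega^{(2)}(\tau))\to U_{\sigma_1}(t,\tau)(v^{(1)}(\tau),\omega^{(1)}(\tau))$ in $\widehat H$, which is the required $(\widehat H^{\epsilon}\times\Sigma^{\epsilon}_{\sigma_0},\widehat H^{\epsilon})$-continuity. I expect the main obstacle to be not the energy estimate itself, which is the routine micropolar one, but the verification that $\int_{\tau}^{t}\gamma$ is finite and uniform over the admissible class — and this is exactly where passing to the restricted spaces $\widehat H^{\epsilon}$ and $\Sigma^{\epsilon}_{\sigma_0}$ is indispensable, since it forces $\delta(T)$ small and thereby, via Theorem \ref{thm0}, delivers global regularity of both solutions with a uniform $W^{2,1}_2(\Omega^{\infty})$-bound, something unavailable for general weak solutions in three dimensions.
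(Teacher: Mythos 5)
Your proposal is correct and follows essentially the same route as the paper: an $L_2$ energy estimate for the difference system, Gronwall's lemma, and control of the Gronwall exponent through the global $W^{2,1}_2(\Omega^{\infty})$ bound of Theorem \ref{thm0} (the paper bounds $\norm{\nabla v^1}_{L_2(\tau,t;L_3(\Omega))}$ via $H^2\hookrightarrow W^1_3$ rather than using an $L_\infty$-in-time $H^1$ bound, but this is a cosmetic difference). Your explicit justification that Theorem \ref{thm0} is applicable on $\widehat H^{\epsilon}\times\Sigma^{\epsilon}_{\sigma_0}$ is a point the paper leaves implicit.
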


\begin{proof}
	Let $\sigma_1 = (f^1,g^1)$ and $\sigma_2 = (f^2,g^2)$ be two different symbols. Consider two corresponding solutions $(v^1,\omega^1)$ and $(v^2,\omega^2)$ to problem \eqref{p1} with the initial conditions $(v^1_\tau,\omega^1_\tau)$ and $(v^2_\tau,\omega^2_\tau)$, respectively. Denote $V = v^1 - v^2$, $\Theta = \omega^1 - \omega^2$, $P = p^1 - p^2$, $F = f^1 - f^2$ and $G = g^1 - g^2$. Then the pair $(V,\Theta)$,
	\begin{equation*}
		(V,\Theta) = U_{\sigma_1}(t,\tau)(v^1_\tau,\omega^1_\tau) - U_{\sigma_2}(t,\tau)(v^2_\tau,\sigma^2_\tau),
	\end{equation*}
	is a solution to a following problem
	\begin{equation*}
		\begin{aligned}
				&V_{,t}  - (\nu + \nu_r)\triangle V + \nabla P = - V\cdot \nabla v^1 - v^2\cdot \nabla V + 2\nu_r\Rot\Theta + F& &\text{in } \Omega^t, \\
			&\Div V = 0 & &\text{in } \Omega^t,\\
			&\begin{aligned}
				&\Theta_{,t} - \alpha \triangle \Theta - \beta\nabla\Div\Theta + 4\nu_r \Theta \\
				&\mspace{60mu} = -v^1\cdot \nabla \Theta - V\cdot \nabla \omega^2 + 2\nu_r\Rot V + G
			\end{aligned}& &\text{in } \Omega^t,\\
			&\Rot V \times n = 0, \qquad V\cdot n = 0 & &\text{on } S^t, \\
			&\Theta = 0 & &\text{on } S^t_1, \\
			&\Theta' = 0, \qquad \Theta_{3,x_3} = 0 & &\text{on } S^t_2, \\
			&V\vert_{t = \tau} = V_\tau, \qquad \Theta\vert_{t = \tau} = \Theta_\tau & &\text{on } \Omega\times\{t = \tau\}.
		\end{aligned}
	\end{equation*}
	With $F$, $G$ and the initial conditions set to zero, we studied the above problem in \cite[see Lemma 11.1]{2012arXiv1205.4046N}. Thus, by analogous calculations we obtain the inequality
	\begin{multline*}
		\frac{1}{2}\Dt \int_{\Omega} \abs{V}^2 + \abs{\Theta}^2\, \ud x + \frac{\nu}{2c_{\Omega}}\norm{\Rot V}^2_{L_2(\Omega)} \\
		\leq \frac{c_{I,\Omega}}{\nu} \left(\norm{\nabla v^1}^2_{L_3(\Omega)} \norm{V}^2_{L_2(\Omega)} + \norm{\nabla \omega^2}^2_{L_3(\Omega)} \norm{\Theta}^2_{L_2(\Omega)} + \norm{F}^2_{L_{\frac{6}{5}}(\Omega)} + \norm{G}^2_{L_{\frac{6}{5}}(\Omega)}\right)
	\end{multline*}
	Application of the Gronwall inequality shows
	\begin{multline*}
		\norm{V(t)}^2_{L_2(\Omega)} + \norm{\Theta(t)}^2_{L_2(\Omega)} \leq c_{\nu,I,\Omega}\exp\left(\norm{\nabla v^1}^2_{L_2(\tau,t;L_3(\Omega))} + \norm{\nabla \omega^2}^2_{L_2(\tau,t;L_3(\Omega))}\right) \\
		\cdot \left(\norm{V(\tau)}^2_{L_2(\Omega)} + \norm{\Theta(\tau)}^2_{L_2(\Omega)} + \norm{F}^2_{L_2(\tau,t;L_{\frac{6}{5}}(\Omega))} + \norm{G}^2_{L_2(\tau,t;L_{\frac{6}{5}}(\Omega))}\right).
	\end{multline*}
	Since $H^2(\Omega) \hookrightarrow W^1_6(\Omega) \hookrightarrow W^1_3(\Omega)$ we see that
	\begin{align*}
		\norm{\nabla v^1}^2_{L_2(\tau,t;L_3(\Omega))} &\leq c_{\Omega} \norm{v^1}^2_{W^{2,1}_2(\Omega\times(\tau,t))}, \\
		\norm{\nabla \omega^2}^2_{L_2(\tau,t;L_3(\Omega))} &\leq c_{\Omega}\norm{\omega^2}^2_{W^{2,1}_2(\Omega\times(\tau,t))},
	\end{align*}

	By Theorem \ref{thm0} we get
	\begin{multline*}
		\norm{V(t)}^2_{L_2(\Omega)} + \norm{\Theta(t)}^2_{L_2(\Omega)} \\
		\leq c_{data}\left(\norm{V(\tau)}^2_{L_2(\Omega)} + \norm{\Theta(\tau)}^2_{L_2(\Omega)} + \norm{F}^2_{L_2(\tau,t;L_{\frac{6}{5}}(\Omega))} + \norm{G}^2_{L_2(\tau,t;L_{\frac{6}{5}}(\Omega))}\right),
	\end{multline*}
	which ends the proof.
\end{proof}

\section{Existence of the uniform attractor}\label{sec6}
In this section we prove the existence of the uniform attractor to problem \eqref{p1} restricted to $\widehat H^{\epsilon}$. By $\mathcal{B}(\widehat H)$ we denote the family of all bounded sets of $\widehat H$. We begin by introducing some fundamental definitions: 
\begin{defi}
	A family of processes $\{U_\sigma(t,\tau)\}_{t\geq \tau \geq 0}$, $\sigma \in \Sigma$ is said to be \emph{uniformly bounded} if for any $B \in \mathcal{B}(\widehat H)$ the set
	\begin{equation*}
		\bigcup_{\sigma\in\Sigma}\bigcup_{\tau\in\mathbb{R}^+}\bigcup_{t\geq\tau}U_\sigma(t,\tau)B\in\mathcal{B}(\widehat H).
	\end{equation*}
\end{defi}

\begin{defi}
	A set $B_0\in \widehat H^{\epsilon}$ is said to be \emph{uniformly absorbing} for the family of processes $\{U_\sigma(t,\tau)\}_{t\geq \tau \geq 0}$, $\sigma \in \Sigma$, if for any $\tau\in\mathbb{R}^+$ and for every $B\in\mathcal{B}(\widehat H)$ there exists $t_0 = t_0(\tau,B)$ such that $\bigcup_{\sigma\in\Sigma}U_\sigma(t,\tau)B\subseteq B_0$ for all $t\geq t_0$. If the set $B_0$ is compact, we call the family of processes \emph{uniformly compact}. 
\end{defi}

\begin{defi}\label{def6.3}
    A set $P$ belonging to $\widehat H$ is said to be \emph{uniformly attracting} for the family of processes $\{U_\sigma(t,\tau)\}_{t\geq \tau \geq 0}$, $\sigma \in \Sigma$, if for an arbitrary fixed $\tau \in \mathbb{R}^+$
	\begin{equation*}
		\lim_{t\to\infty} \left(\sup_{\sigma\in\Sigma}\dist_E \big(U_\sigma(t,\tau)B,P\big)\right) = 0.
	\end{equation*}
	If the set $P$ is compact, we call the family of processes \emph{uniformly asymptotically compact}. 
\end{defi}

\begin{defi}
	A closed set $\mathcal{A}_{\Sigma} \subset \widehat H$ is said to be the \emph{uniform attractor} of the family of processes $\{U_\sigma(t,\tau)\}_{t\geq \tau \geq 0}$, $\sigma \in \Sigma$, if it is uniformly attracting set and it is contained in any closed uniformly attracting set $\mathcal{A}'$ of the family processes $\{U_\sigma(t,\tau)\}_{t\geq \tau \geq 0}$, $\sigma \in \Sigma$, $\mathcal{A}_{\Sigma} \subseteq \mathcal{A}'$.
\end{defi}

To prove the existence of the uniform attractor we need two technical lemmas:

\begin{lem}\label{lem33}
	Let the assumptions of Theorem \ref{thm0} hold. Then, there exists a bounded and absorbing set in $\widehat H^{\epsilon}$ for the family of semiprocesses $\{U_\sigma(t,\tau)\}_{t\geq \tau \geq 0}$, $\sigma \in \Sigma^{\epsilon}_{\sigma_0}$.
\end{lem}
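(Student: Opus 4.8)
The plan is to derive energy estimates for the solution $(v(t),\omega(t))$ of problem \eqref{p1} in the $\widehat{H}$-norm and in the auxiliary ``derivative along the axis'' quantities, and to show that both quantities enter a fixed ball after a finite time that depends only on the bound of the initial data set $B$ and on $\tau$, not on the symbol $\sigma \in \Sigma^{\epsilon}_{\sigma_0}$. First I would test equation \eqref{p1}$_1$ with $v$ and equation \eqref{p1}$_3$ with $\omega$, integrate over $\Omega$, use the boundary conditions to kill the boundary terms, and exploit the usual cancellation of the coupling terms $2\nu_r\Rot\omega$ and $2\nu_r\Rot v$ together with the positivity of the term $4\nu_r\omega$. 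Combined with a Poincaré-type inequality on the cylinder (the constant $c_\Omega$ appearing throughout) and Young's inequality to absorb $f, g$ via their $L_{6/5}$-norms, this yields a differential inequality of the form
\begin{equation*}
	\Dt\big(\norm{v}^2_{L_2(\Omega)} + \norm{\omega}^2_{L_2(\Omega)}\big) + c_1\big(\norm{v}^2_{L_2(\Omega)} + \norm{\omega}^2_{L_2(\Omega)}\big) \leq c\big(\norm{f}^2_{L_{6/5}(\Omega)} + \norm{g}^2_{L_{6/5}(\Omega)}\big).
\end{equation*}
Since the symbols in $\Sigma^{\epsilon}_{\sigma_0}$ are built from the $S^2$-asymptotically almost periodic $\sigma_0$ (Remark \ref{rem8}) and satisfy the decay hypotheses of Theorem \ref{thm0}, the right-hand side is uniformly bounded in $L^S_2$; applying the Gronwall lemma over $[\tau,t]$ then shows that $\norm{v(t)}^2_{L_2(\Omega)} + \norm{\omega(t)}^2_{L_2(\Omega)}$ is eventually below a constant $\rho_0^2$ independent of $\sigma$ and of $(v(\tau),\omega(\tau)) \in B$, with entry time $t_0(\tau,B)$.

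Next I would repeat the argument for the quantities controlling the rate of change along the axis of the cylinder, i.e. for $h = v_{,x_3}$ (and $\Rot h$) and $\theta = \omega_{,x_3}$. Differentiating \eqref{p1} with respect to $x_3$ and testing appropriately produces, as in \cite{2012arXiv1205.4046N}, a differential inequality for $\norm{h}^2_{L_2(\Omega)} + \norm{\Rot h}^2_{L_2(\Omega)} + \norm{\theta}^2_{L_2(\Omega)}$ whose forcing involves $f_{,x_3}, g_{,x_3}$ and, crucially, the quantity $\delta(t)$ from \eqref{eq26} together with lower-order products that are handled by the smallness of $\delta(T)$ granted by the hypotheses of Theorem \ref{thm0}. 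Smallness of $\delta$ is exactly what is needed to close this second inequality and keep these quantities below $\epsilon$ for large $t$, so that the absorbing ball indeed sits inside $\widehat{H}^{\epsilon}$ (which is the intersection of $\widehat{H}$ with the $\omega$-limit set $\omega_{\tau,\Sigma^{\epsilon}_{\sigma_0}}(B^{\epsilon})$, and hence naturally carries the $<\epsilon$ constraint on the $x_3$-derivatives). I would then take $B_0$ to be the ball in $\widehat{H}^{\epsilon}$ of radius $\rho_0$ (in the $\widehat{H}$-metric), and the two estimates together give $\bigcup_{\sigma}U_\sigma(t,\tau)B \subseteq B_0$ for all $t \geq t_0(\tau,B)$.

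The main obstacle I expect is \emph{uniformity in the symbol}: the estimates must not depend on the particular $\sigma \in \Sigma^{\epsilon}_{\sigma_0}$ but only on the hull, and for this one has to use that the $S^2$-norm of every element of $\mathcal{H}_+(\sigma_0)$ is controlled by that of $\sigma_0$ (Proposition \ref{prop3} and the translation-compactness from Remark \ref{rem8}), so that $\sup_{\sigma}\norm{f}^2_{L_2(\tau,t;L_{6/5}(\Omega))}$ and the analogous $x_3$-derivative norms are finite and uniformly bounded. A secondary technical point is that the constant $c_{data}$ from Theorem \ref{thm0} and Lemma \ref{lem35}, which bounds the $W^{2,1}_2$-norms of $v,\omega$, must itself be taken uniformly over the hull; this again reduces to the uniform bound $\sup_k\{f(kT),\dots\}<\infty$ assumed in Theorem \ref{thm0}. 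Once these uniform bounds are in place, the Gronwall argument is routine and the lemma follows.
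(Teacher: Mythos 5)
Your proposal is correct and follows essentially the same route as the paper: the paper simply invokes assertion $(\mathbf{B})$ of Lemma 5.1 of \cite{2012arXiv1205.4507N} for exactly the $L_2$-energy estimate you re-derive, obtaining the monotonicity of $\norm{v(kT)}_{L_2(\Omega)}^2+\norm{\omega(kT)}_{L_2(\Omega)}^2$ in $k$ and the $\limsup$ bound $R_1$, and then takes the ball $B(0,\rho_1)$ with $\rho_1>R_1$ as the absorbing set. Your additional care about the $x_3$-derivative quantities (needed to place the absorbing ball inside $\widehat H^{\epsilon}$ via the smallness of $\delta$) and about uniformity of the forcing norms over the hull supplies details that the paper leaves implicit.
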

\begin{proof}
	Under the assumption of Theorem \ref{thm0} and in view assertion $(\mathbf{B})$ of \cite[Lemma 5.1]{2012arXiv1205.4507N} we see that for $t = (k + 1)T$, $t_0 = kT$ and sufficiently large $T > 0$
	\begin{equation*}
		\norm{v\big((k + 1)T\big)}_{L_2(\Omega)}^2 + \norm{\omega\big((k + 1)T\big)}_{L_2(\Omega)}^2 \leq \norm{v(kT)}_{L_2(\Omega)}^2 + \norm{\omega(kT)}_{L_2(\Omega)}^2
	\end{equation*}
	holds. By the same Lemma
	\begin{multline}\label{eq55}
		\limsup_{t\to\infty}\left(\norm{v(t)}_{L_2(\Omega)}^2 + \norm{\omega(t)}^2_{L_2(\Omega)}\right) \\
		\leq c_{\nu,\alpha,\Omega}\left(\sup_{k\in\mathbb{N}}\left(\norm{f(kT)}^2_{L_2(\Omega)} + \norm{g(kT)}^2_{L_2(\Omega)}\right) + \norm{v(0)}_{L_2(\Omega)}^2 + \norm{\omega(0)}_{L_2(\Omega)}^2\right) =: R_1.
	\end{multline}
	Thus, for every $(v_0,\omega_0)\in \widehat H^{\epsilon}$ there exists $t_0 > 0$ such that
	\begin{equation*}
		(v(t),\omega(t)) \in B(0,\rho_1) \qquad \forall_{t\geq t_0},
	\end{equation*}
	where $B(0,\rho_1)$ is the ball in $\widehat H^{\epsilon}$ centered at $0$ with radius $\rho_1 > R_1$. If $B(0,r)\subset \widehat H^{\epsilon}$ is any ball such that $(v_0,\omega_0) \in B(0,r)$ then there exists $t_0  =t_0(r)$ such that \eqref{eq55} holds. This proves the existence of bounded absorbing sets in $\widehat H^{\epsilon}$ for the semiprocess $\{U_\sigma(t,\tau)\}_{t\geq \tau \geq 0}$, $\sigma \in \Sigma^{\epsilon}_{\sigma_0}$.
\end{proof}

\begin{lem}\label{lem34}
	Let the assumptions of Theorem \ref{thm0} hold. Then, there exists a bounded and absorbing set in $\widehat V^{\epsilon}$ for the family of semiprocesses $\{U_\sigma(t,\tau)\}_{t\geq \tau \geq 0}$, $\sigma \in \Sigma^{\epsilon}_{\sigma_0}$.
\end{lem}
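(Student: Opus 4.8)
The plan is to mirror the strategy of Lemma~\ref{lem33}, but now working one level higher in regularity: instead of controlling $L_2$-norms of $(v,\omega)$ we must control $H^1$-norms, equivalently $\norm{\nabla v}_{L_2(\Omega)}$, $\norm{\Rot v}_{L_2(\Omega)}$ and $\norm{\nabla\omega}_{L_2(\Omega)}$. The crucial point is that all the necessary a~priori estimates are already available: Theorem~\ref{thm0} gives the global bound in $W^{2,1}_2(\Omega^\infty)$ for $(v,\omega)$ together with $\nabla p$, and in particular $(v,\omega)\in V^1_2(\Omega^{kT})$ uniformly in $k$. Thus the first step is to invoke Theorem~\ref{thm0} (or the corresponding energy estimate on each slab $[kT,(k+1)T]$ from \cite{2012arXiv1205.4507N}) to obtain a Gronwall-type inequality for $\frac{1}{2}\Dt\big(\norm{\Rot v}^2_{L_2(\Omega)}+\norm{\nabla\omega}^2_{L_2(\Omega)}\big)$ with a dissipative leading term $-c_1\big(\norm{\Rot v}^2_{L_2(\Omega)}+\norm{\nabla\omega}^2_{L_2(\Omega)}\big)$ and a right-hand side controlled by $\norm{f}^2_{L_2(\Omega)}+\norm{g}^2_{L_2(\Omega)}$ plus the already-absorbed lower-order quantities.

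The second step is to integrate this differential inequality over a slab $[kT,(k+1)T]$ and, using the exponential decay of $f,g$ on each such interval assumed in Theorem~\ref{thm0}, conclude the monotonicity-type bound
\begin{equation*}
	\norm{v\big((k+1)T\big)}^2_{H^1(\Omega)}+\norm{\omega\big((k+1)T\big)}^2_{H^1(\Omega)} \leq \norm{v(kT)}^2_{H^1(\Omega)}+\norm{\omega(kT)}^2_{H^1(\Omega)}
\end{equation*}
for $T$ large enough, and then pass to the limit to get $\limsup_{t\to\infty}\big(\norm{v(t)}^2_{H^1(\Omega)}+\norm{\omega(t)}^2_{H^1(\Omega)}\big)\leq R_2$, with $R_2$ depending only on $\sup_k\big(\norm{f(kT)}^2_{L_2}+\norm{g(kT)}^2_{L_2}\big)$ and the $H^1$-norms of the initial data. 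This produces a ball $B(0,\rho_2)$ in $\widehat V^\epsilon$, $\rho_2>R_2$, absorbing every trajectory starting from a bounded subset of $\widehat V^\epsilon$; one must also note that the restriction to $\widehat V^\epsilon = \widehat V\cap\omega_{\tau,\Sigma^\epsilon_{\sigma_0}}(B^\epsilon)$ is preserved by the flow because the defining $x_3$-derivative quantities stay below $\epsilon$, which is exactly the content of the smallness assumption $\delta(T)$ small and was established in \cite{2012arXiv1205.4046N}.

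I expect the main obstacle to be making the leading dissipative term in the $H^1$-estimate genuinely negative, i.e. absorbing the nonlinear terms $v\cdot\nabla v$ and $v\cdot\nabla\omega$ after testing with $-\triangle v$ (or $\Rot\Rot v$) and $-\triangle\omega$. In three dimensions these are borderline and are handled precisely by the cylindrical structure and the smallness of the $x_3$-derivatives together with large $\nu$; rather than redo this I would point to the analogous computation in \cite[Lemma~5.1]{2012arXiv1205.4507N} and \cite{2012arXiv1205.4046N}, from which the required inequality follows by the same manipulations. A secondary technical point is the boundary terms generated by integration by parts under the conditions $\Rot v\times n=0$, $v\cdot n=0$, $\omega'=0$, $\omega_{3,x_3}=0$; these vanish or have a favorable sign by the choice of boundary conditions, as already verified in the cited references, so the proof reduces to quoting those estimates and running the Gronwall/slab argument of Lemma~\ref{lem33} in the $H^1$ setting.
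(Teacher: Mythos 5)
Your proposal follows essentially the same route as the paper: the paper simply quotes the slab-wise $H^1$ a priori estimate from \cite[Lemma 5.2]{2012arXiv1205.4507N} with $t_0 = kT$, $t_1 = (k+1)T$ to obtain $\limsup_{t\to\infty}\big(\norm{v(t)}^2_{H^1(\Omega)}+\norm{\omega(t)}^2_{H^1(\Omega)}\big)\leq R_2$ and then concludes with the absorbing ball $B(0,\rho_2)$, $\rho_2>R_2$, exactly as in Lemma \ref{lem33}. Your additional discussion of the dissipative $H^1$ inequality and the preservation of $\widehat V^{\epsilon}$ is a reasonable elaboration of what that cited lemma provides, so the argument matches the paper's.
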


\begin{proof}
	From the assumptions of Theorem \ref{thm0} and from \cite[Lemma 5.2]{2012arXiv1205.4507N}, where we set $t_1 = (k + 1)T$, $t_0 = kT$, we infer that
	\begin{multline}\label{eq12}
		\limsup_{t \to \infty} \left(\norm{v(t)}^2_{H^1(\Omega)} + \norm{v(t)}^2_{H^1(\Omega)}\right) \\
		\leq c_{\nu,\alpha,\beta,I,P,\Omega}\left(\sup_{k\in\mathbb{N}}\left(\norm{f(kT)}^2_{L_2(\Omega)} + \norm{g(kT)}^2_{L_2(\Omega)}\right) + \norm{v(0)}_{H^1(\Omega)}^2 + \norm{\omega(0)}_{H^1(\Omega)}^2\right) =: R_2.
	\end{multline}
	Likewise in previous Lemma, for every $(v_0,\omega_0)\in \widehat V^{\epsilon}$ there exists $t_0 > 0$ such that
	\begin{equation*}
		(v(t),\omega(t)) \in B(0,\rho_2) \qquad \forall_{t\geq t_0},
	\end{equation*}
	where $B(0,\rho_2)$ is the ball in $\widehat V^{\epsilon}$ centered at $0$ with radius $\rho_2 > R_2$. For any $(v_0,\omega_0) \in B(0,r) \subset \widehat V^{\epsilon}$ there exists $t_0  =t_0(r)$ such that \eqref{eq12} holds. Therefore there exist bounded absorbing sets in $\widehat V^{\epsilon}$ for the semiprocess $\{U_\sigma(t,\tau)\}_{t\geq \tau \geq 0}$, $\sigma \in \Sigma^{\epsilon}_{\sigma_0}$.
\end{proof}

\begin{proof}[Proof of Theorem \ref{thm1}]
	To prove the existence of the uniform attractor we make use of Proposition \ref{prop1}. We need to check the assumptions:
	\begin{itemize}
		\item	From Remark \ref{rem8} and by assumption on the data it follows that $\sigma_0 = (f_0,g_0) \in \Xi_+$ is translation compact.
		\item From Definition \ref{def6.3} it follows that the family of semiprocesses $\{U_\sigma(t,\tau)\}_{t\geq \tau \geq 0}$, $\sigma \in \Sigma^{\epsilon}_{\sigma_0}$ is uniformly asymptotically compact, if there exists a set $P \in \widehat H^{\epsilon}$, which is uniformly attracting and compact. Lemmas \ref{lem33} and \ref{lem34} and the Sobolev embedding theorem ensure the existence of such set $P$.
		\item The $(\widehat H^{\epsilon}\times \Sigma^{\epsilon}_{\sigma_0},\widehat H^{\epsilon})$-continuity of the family of semiprocesses $\{U_\sigma(t,\tau)\}_{t\geq \tau \geq 0}$, $\sigma \in \Sigma^{\epsilon}_{\sigma_0}$ was proved in Lemma \ref{lem35}.
		\item To check that $\omega(\Sigma_{\sigma_0}^{\epsilon})$ is the global attractor for the translation semigroup $\{T(t)\}_{t\geq 0}$ acting on $\Sigma_{\sigma_0}^{\epsilon}$ we recall that $\Sigma_{\sigma_0}^{\epsilon} \Subset \Xi_+$ is compact metric space. We can therefore apply Proposition \ref{prop2}. Thus,
			\begin{align*}
				\dist_{\Xi_+} \big(T(t)\Sigma_{\sigma_0}^{\epsilon},\omega(\Sigma_{\sigma_0}^{\epsilon})\big) \xrightarrow[t\to 0]{}0
				\intertext{and}
				T(t)\omega(\Sigma_{\sigma_0}^{\epsilon}) = \omega(\Sigma_{\sigma_0}^{\epsilon}) \qquad \forall_{t\geq 0}.
		\end{align*}
	\end{itemize}
	As we see all assumptions are satisfied. The application of Proposition \ref{prop1} ends the proof.
\end{proof}

\section{Convergence to stationary solutions for large $\nu$}\label{sec7}

In this section we prove Theorem \ref{thm3}. 

\begin{proof}
    The existence of solutions for large $\nu$ and theirs estimate follows, after slight modifications, from \cite{luk0} and \cite[Ch. 2, \S 1, Theorems 1.1.1, 1.1.2 and 1.1.3]{luk1}.
	
	Let $V = v(t)-v_\infty$, $\Theta = \omega(t) - \omega_{\infty}$ and $P = p(t) - p_{\infty}$. Then $(V,\Theta)$ satisfies
	\begin{equation}\label{eq38}
		\begin{aligned}
			&V_{,t} - (\nu + \nu_r)\triangle V + \nabla P = -v\cdot\nabla V - V\cdot\nabla v_\infty + 2\nu_r \Rot \Theta  & &\text{in $\Omega^t$}, \\
			&\Div V = 0 & &\text{in $\Omega^t$}, \\
			&\begin{aligned}
				&\Theta_{,t} - \alpha \triangle \Theta - \beta\nabla\Div\Theta + 4\nu_r \Theta \\
				&\mspace{60mu} = - v\cdot\nabla \Theta - V\cdot \nabla \omega_{\infty} + 2\nu_r \Rot V 
			\end{aligned} & &\text{in $\Omega^t$}, \\
			&\Rot V\times n = 0 & &\text{on $S^t$}, \\
			&V\cdot n  = 0 & &\text{on $S^t$}, \\
			&\Theta = 0 & &\text{on $S_1^t$}, \\
			&\Theta' = 0, \qquad \Theta_{3,x_3} = 0 & &\text{on $S_2^t$}, \\
			&V\vert_{t = 0} = v(0) - v_\infty, \qquad \Theta\vert_{t = 0} = \omega(0) - \omega_{\infty} & &\text{in $\Omega\times\{t = 0\}$}.
		\end{aligned}
	\end{equation}
	Multiplying the first and the third equation by $V$ and $\Theta$ respectively and integrating over $\Omega$ yields
	\begin{multline*}
		\frac{1}{2}\Dt \norm{V}^2_{L_2(\Omega)} + \left(\nu + \nu_r\right)\norm{\Rot V}_{L_2(\Omega)}^2 + \int_S \Rot V\times n \cdot V\, \ud S \\
		= -\int_{\Omega} V\cdot \nabla v_{\infty} \cdot V\, \ud x + 2\nu_r\int_{\Omega} \Rot V \cdot \Theta\, \ud x + \int_S \Theta \times n \cdot V\, \ud S
	\end{multline*}
	and
	\begin{multline*}
		\frac{1}{2}\Dt \norm{\Theta}^2_{L_2(\Omega)} + \alpha\norm{\Rot\Theta}^2_{L_2(\Omega)} + (\alpha + \beta)\norm{\Div \Theta}^2_{L_2(\Omega)} + 4\nu_r\norm{\Theta}^2_{L_2(\Omega)} \\
		= -\int_{\Omega} V\cdot \nabla \omega_{\infty}\cdot \Theta\, \ud x + 2\nu_r\int_{\Omega} \Rot V \cdot \Theta\, \ud x, 
	\end{multline*}
	because in view of \eqref{eq38}$_{2,5}$ and \eqref{p1}$_{2,4}$ we see that
	\begin{equation*}
		\begin{aligned}
			&\int_{\Omega}\nabla P\cdot V\, \ud x = \int_S P (V\cdot n)\, \ud S = 0, \\
			&-\int_{\Omega}v\cdot \nabla V\cdot V\, \ud x = -\frac{1}{2}\int_{\Omega}v\cdot \nabla\abs{V}^2\, \ud x = -\int_S \abs{V}^2 (v\cdot n)\, \ud S = 0, \\
			&-\int_{\Omega}v\cdot \nabla \Theta\cdot \Theta\, \ud x = -\frac{1}{2}\int_{\Omega}v\cdot \nabla\abs{\Theta}^2\, \ud x = -\int_S \abs{\Theta}^2 (v\cdot n)\, \ud S = 0.
		\end{aligned}
	\end{equation*}
	Adding both equalities gives
	\begin{multline}\label{eq39}
		\frac{1}{2}\Dt \left(\norm{V}^2_{L_2(\Omega)} + \norm{\Theta}^2_{L_2(\Omega)}\right) + \left(\nu + \nu_r\right)\norm{\Rot V}_{L_2(\Omega)}^2 + \alpha\norm{\Rot\Theta}^2_{L_2(\Omega)} \\
		+ (\alpha + \beta)\norm{\Div \Theta}^2_{L_2(\Omega)} + 4\nu_r\norm{\Theta}^2_{L_2(\Omega)} \\
		= 4\nu_r\int_{\Omega} \Rot V \cdot \Theta\, \ud x -\int_{\Omega} V\cdot \nabla v_{\infty} \cdot V\, \ud x -\int_{\Omega} V\cdot \nabla \omega_{\infty}\cdot \Theta\, \ud x =: I_1 + I_2 + I_3
	\end{multline}
	Utilizing the H\"older and the Young inequalities on the right-hand side we have for $I_1$
	\begin{equation*}
			I_1 \leq 4\nu_r \norm{\Rot V}_{L_2(\Omega)}\norm{\Theta}_{L_2(\Omega)} \leq 4\nu_r\epsilon_1\norm{\Rot V}_{L_2(\Omega)} + \frac{\nu_r}{\epsilon_1}\norm{\Theta}_{L_2(\Omega)}.
	\end{equation*}
	To estimate $I_2$ and $I_3$ we also make use of the interpolation inequality for $L_p$-spaces and the embedding theorem:
	\begin{equation*}
		\begin{aligned}
			\norm{u}_{L_4(\Omega)} \leq \norm{u}_{L_2(\Omega)}^{\frac{1}{4}}\norm{u}_{L_6(\Omega)}^{\frac{3}{4}} \leq c_I  \norm{u}_{L_2(\Omega)}^{\frac{1}{4}}\norm{u}_{H^1(\Omega)}^{\frac{3}{4}}, \\
			\norm{u}_{L_3(\Omega)} \leq \norm{u}_{L_2(\Omega)}^{\frac{1}{2}}\norm{u}_{L_6(\Omega)}^{\frac{1}{2}} \leq c_I \norm{u}_{L_2(\Omega)}^{\frac{1}{2}}\norm{u}_{H^1(\Omega)}^{\frac{1}{2}}.
		\end{aligned}
	\end{equation*}
	Thus
	\begin{multline*}
		I_2 \leq \norm{V}_{L_4(\Omega)}\norm{\nabla v_{\infty}}_{L_2(\Omega)}\norm{V}_{L_4(\Omega)} \leq c_I\norm{V}_{L_2(\Omega)}^{\frac{1}{2}} \norm{V}_{H^1(\Omega)}^{\frac{3}{2}}\norm{v_{\infty}}_{H^1(\Omega)} \\
		\leq \epsilon_2c_I \norm{V}_{H^1(\Omega)}^2 + \frac{c_I}{4\epsilon_2} \norm{V}_{L_2(\Omega)}^2\norm{v_{\infty}}_{H^1(\Omega)}^4
	\end{multline*}
	and
	\begin{multline*}
		I_3 \leq \norm{V}_{L_6(\Omega)} \norm{\nabla \omega_{\infty}}_{L_2(\Omega)} \norm{\Theta}_{L_3(\Omega)} \leq c_I\norm{V}_{H^1(\Omega)}\norm{\omega_{\infty}}_{H^1(\Omega)} \norm{\Theta}_{L_2(\Omega)}^{\frac{1}{2}} \norm{\Theta}_{H^1(\Omega)}^{\frac{1}{2}} \\
		\leq c_I \epsilon_{31}\norm{V}_{H^1(\Omega)}^2 + \frac{c_I}{4\epsilon_{31}}\norm{\omega_{\infty}}_{H^1(\Omega)}^2\norm{\Theta}_{L_2(\Omega)}\norm{\Theta}_{H^1(\Omega)} \\
		\leq c_I \epsilon_{31}\norm{V}_{H^1(\Omega)}^2 + \frac{c_I\epsilon_{32}}{4\epsilon_{31}} \norm{\Theta}^2_{H^1(\Omega)} + \frac{c_I}{16\epsilon_{31}\epsilon_{32}}\norm{\omega_{\infty}}_{H^1(\Omega)}^4\norm{\Theta}_{L_2(\Omega)}^2.
	\end{multline*}
	Now we set $\epsilon_1 = \frac{1}{4}$. By \cite[Lemma 6.7]{2012arXiv1205.4046N} it follows that
	\begin{equation*}
		\nu\norm{\Rot V}_{L_2(\Omega)}^2 + \alpha\norm{\Rot\Theta}^2_{L_2(\Omega)} + (\alpha + \beta)\norm{\Div \Theta}^2_{L_2(\Omega)} \geq \frac{\nu}{c_{\Omega}} \norm{V}_{H^1(\Omega)}^2 + \frac{\alpha}{c_{\Omega}}\norm{\Theta}^2_{H^1(\Omega)}.
	\end{equation*}
	It enables us to put $\epsilon_2 c_I = \epsilon_{31} c_I = \frac{\nu}{4c_{\Omega}}$ and $\epsilon_{32} \frac{c_I}{4\epsilon_{31}} =  \frac{\alpha}{2c_{\Omega}}$. Hence
	\begin{align*}
		\epsilon_2 = \frac{\nu}{4c_{I,\Omega}}, & &\epsilon_{31} = \frac{\nu}{4c_{I,\Omega}}, & &\epsilon_{32} = \frac{\alpha\nu}{3c_{I,\Omega}}, & \\
		\frac{c_I}{4\epsilon_2} = \frac{c_{I,\Omega}}{\nu}, & & & &\frac{c_I}{16\epsilon_{31}\epsilon_{32}} = \frac{3c_{I,\Omega}}{4\alpha\nu^2}. &
	\end{align*}
	Therefore, from \eqref{eq39} we obtain
	\begin{multline*}
		\frac{1}{2}\Dt\left(\norm{V}^2_{L_2(\Omega)} + \norm{\Theta}_{L_2(\Omega)}^2\right) + \frac{\nu}{2c_{\Omega}}\norm{V}_{H^1(\Omega)}^2 + \frac{\alpha}{2c_{\Omega}}\norm{\Theta}^2_{H^1(\Omega)} \\
		\leq \frac{c_{I,\Omega}}{\nu}\norm{V}_{L_2(\Omega)}^2\norm{v_\infty}^4_{H^1(\Omega)} +  \frac{3c_{I,\Omega}}{4\alpha\nu^2}\norm{\Theta}_{L_2(\Omega)}^2\norm{\omega_\infty}^4_{H^1(\Omega)}.
	\end{multline*}
	Let
	\begin{equation*}
		\begin{aligned}
			c_1(\nu) &= \frac{\min\{\nu,\alpha\}}{c_{\Omega}}, \\
			c_2 &= \frac{c_{I,\Omega}}{\min\{1,\alpha\}}
		\end{aligned}
	\end{equation*}
	and define
	\begin{equation*}
		\Delta(\nu) = c_1(\nu) - \frac{3c_2}{\nu}F^4\big(\norm{f_\infty}_{L_2(\Omega)},\norm{g_\infty}_{L_2(\Omega)}\big).
	\end{equation*}
	Observe, that $\Delta(\nu) \to \frac{\alpha}{c_{\Omega}} > 0$ as $\nu \to \infty$. In particular there exists $\nu_* > 0$ such that $\Delta(\nu) > 0$ for any $\nu > \nu_*$ holds. The last inequality implies
	\begin{equation*}
		\Dt\left(\norm{V}^2_{L_2(\Omega)} + \norm{\Theta}_{L_2(\Omega)}^2\right) + \Delta(\nu)\left(\norm{V}_{L_2(\Omega)}^2 + \norm{\Theta}^2_{L_2(\Omega)}\right) \leq 0,
	\end{equation*}
	which is equivalent to
	\begin{equation*}
		\Dt\left(\left(\norm{V}^2_{L_2(\Omega)} + \norm{\Theta}_{L_2(\Omega)}^2\right)e^{\Delta(\nu)t}\right) \leq 0.
	\end{equation*}
	Integration with respect to $t$ yields
	\begin{equation*}
		\left(\norm{V(t)}^2_{L_2(\Omega)} + \norm{\Theta(t)}_{L_2(\Omega)}^2\right)e^{\Delta(\nu)t} \leq \norm{V(0)}^2_{L_2(\Omega)} + \norm{\Theta(0)}_{L_2(\Omega)}^2,
	\end{equation*}
	which implies
	\begin{equation*}
		\norm{V(t)}^2_{L_2(\Omega)} + \norm{\Theta(t)}_{L_2(\Omega)}^2 \leq \left(\norm{V(0)}^2_{L_2(\Omega)} + \norm{\Theta(0)}_{L_2(\Omega)}^2\right)e^{-\Delta(\nu)t}.
	\end{equation*}
	This concludes the proof.  
\end{proof}

\section{Continuous dependence on modeling}\label{sec8}

In this Section we examine the difference between $(v,\omega)$ and the solution $(u,\Theta)$ to problem \eqref{p11}.

Observe, that \eqref{p11} is the same as \eqref{p1} with $\nu_r = 0$. Therefore, for $\nu_r$ close to zero we may measure in some sense the deviation of the flows of micropolar fluids from that of modeled by the Navier-Stokes equations. This problem was considered locally in \cite{paye} and globally in \cite[\S 5]{luk2}. 

\begin{proof}[Proof of Theorem \ref{thm2}]
	Let us denote $V(t) = v(t) - u(t)$, $Z(t) = \omega(t) - \Theta(t)$. Then the pair $(V(t),Z(t))$ is a solution to the problem
	\begin{equation*}
		\begin{aligned}
			&V_{,t} - (\nu + \nu_r)\triangle V - \nu_r \triangle u + \nabla (p - q) = -v\cdot\nabla V - V\cdot\nabla u + 2\nu_r \Rot \omega  & &\text{in $\Omega^t$}, \\
			&\Div V = 0 & &\text{in $\Omega^t$}, \\
			&\begin{aligned}
				&Z_{,t} - \alpha \triangle Z - \beta\nabla\Div Z + 4\nu_r \omega \\
				&\mspace{60mu} = - v\cdot\nabla Z - V\cdot \nabla \Theta + 2\nu_r \Rot v 
			\end{aligned} & &\text{in $\Omega^t$}, \\
			&\Rot V\times n = 0 & &\text{on $S^t$}, \\
			&V\cdot n  = 0 & &\text{on $S^t$}, \\
			&Z = 0 & &\text{on $S_1^t$}, \\
			&Z' = 0, \qquad Z_{3,x_3} = 0 & &\text{on $S_2^t$}, \\
			&V\vert_{t = t_0} = v(t_0) - u(t_0), \qquad Z\vert_{t = t_0} = \omega(t_0) - \Theta(t_0) & &\text{in $\Omega\times\{t = t_0\}$}.
		\end{aligned}
	\end{equation*}
	Multiplying the first equation and the third by $V$ and $Z$, respectively and integrating over $\Omega$ yields
	\begin{multline*}
		\frac{1}{2}\Dt \int_{\Omega} \abs{V}^2\, \ud x + (\nu + \nu_r)\int_{\Omega}\abs{\Rot V}^2\, \ud x + (\nu + \nu_r)\int_S \Rot V \times n \cdot V\, \ud S + \nu_r\int_{\Omega} \Rot u \cdot \Rot V\, \ud x \\
		+ \nu_r \int_S \Rot u\times n \cdot V\, \ud S + \int_S (p - q) V \cdot n\, \ud S \\
		= -\int_{\Omega} v\cdot \nabla V\cdot V\, \ud x - \int_{\Omega} V\cdot \nabla u \cdot V\, \ud x + 2\nu_r \int_{\Omega} \Rot \omega\cdot V\, \ud x
	\end{multline*}
	and
	\begin{multline*}
		\frac{1}{2}\Dt \int_{\Omega} \abs{Z}^2\, \ud x + \alpha\int_{\Omega} \abs{\Rot Z}^2\, \ud x + (\alpha + \beta)\int_{\Omega} \abs{\Div Z}^2\, \ud x - \alpha\int_S Z\times n \cdot \Rot Z\, \ud S + 4\nu_r \int_{\Omega}\omega\cdot Z\, \ud x \\
		= -\int_{\Omega} v\cdot \nabla Z\cdot Z\, \ud x - \int_{\Omega} V \cdot \nabla \Theta \cdot Z\, \ud x + 2\nu_r\int_{\Omega} \Rot v\cdot Z\, \ud x.
	\end{multline*}
	All boundary integrals vanish due to the boundary conditions. In addition
	\begin{equation*}
		\begin{aligned}
			-\int_{\Omega} v\cdot \nabla V\cdot V\, \ud x &= -\frac{1}{2}\int_{\Omega} v\cdot \nabla \abs{V}^2\, \ud x = -\frac{1}{2}\int_S \abs{V}^2 v \cdot n \, \ud S = 0, \\
			-\int_{\Omega} v\cdot \nabla Z\cdot Z\, \ud x &= -\frac{1}{2}\int_{\Omega} v\cdot \nabla \abs{Z}^2\, \ud x = -\frac{1}{2}\int_S \abs{Z}^2 v \cdot n \, \ud S = 0.
		\end{aligned}
	\end{equation*}
	We also have
	\begin{equation*}
		2\nu_r\int_{\Omega} \Rot \omega\cdot V\, \ud x = 2\nu_r\int_{\Omega} \omega\cdot \Rot V\, \ud x + 2\nu_r\int_S \omega\times n \cdot V\, \ud S = 2\nu_r\int_{\Omega} \omega\cdot \Rot V\, \ud x,
	\end{equation*}
	which follows from integration by parts and the boundary conditions. Thus, we get
	\begin{multline*}
		\frac{1}{2}\Dt\left(\norm{V}^2_{L_2(\Omega)} + \norm{Z}^2_{L_2(\Omega)}\right) + (\nu + \nu_r)\norm{\Rot V}^2_{L_2(\Omega)} + \alpha\norm{\Rot Z}^2_{L_2(\Omega)} + (\alpha + \beta)\norm{\Div Z}^2_{L_2(\Omega)} \\
		= -\nu_r\int_{\Omega} \Rot u \cdot \Rot V\, \ud x- \int_{\Omega} V\cdot \nabla u \cdot V\, \ud x + 2\nu_r \int_{\Omega} \omega\cdot \Rot V\, \ud x \\
		- 4\nu_r \int_{\Omega}\omega\cdot Z\, \ud x - \int_{\Omega} V \cdot \nabla \Theta \cdot Z\, \ud x + 2\nu_r\int_{\Omega} \Rot v\cdot Z\, \ud x.
	\end{multline*}
	Next we estimate the first and the third term on the right-hand side by the means of the H\"older and the Young inequalities. We have
	\begin{equation*}
		\begin{aligned}
			-\nu_r\int_{\Omega} \Rot u \cdot \Rot V\, \ud x &\leq \nu_r\norm{\Rot u}_{L_2(\Omega)}\norm{\Rot V}_{L_2(\Omega)} \leq \epsilon_1\nu_r\norm{\Rot V}_{L_2(\Omega)}^2 + \frac{\nu_r}{4\epsilon_1}\norm{\Rot u}_{L_2(\Omega)}, \\
			2\nu_r \int_{\Omega} \omega\cdot \Rot V\, \ud x &\leq 2\nu_r\norm{\omega}_{L_2(\Omega)}\norm{\Rot V}_{L_2(\Omega)} \leq 2\epsilon_2\nu_r\norm{\Rot V}_{L_2(\Omega)}^2 + \frac{\nu_r}{2\epsilon_2}\norm{\omega}_{L_2(\Omega)}.
		\end{aligned}
	\end{equation*}
	We set $\epsilon_1 \nu_r = 2\epsilon_2 \nu_r = \frac{\nu_r}{2}$. Thus, $\epsilon_1 = \frac{1}{2}$ and $\epsilon_2 = \frac{1}{4}$. We get
	\begin{multline*}
		\frac{1}{2}\Dt\left(\norm{V}^2_{L_2(\Omega)} + \norm{Z}^2_{L_2(\Omega)}\right) + \nu\norm{\Rot V}^2_{L_2(\Omega)} + \alpha\norm{\Rot Z}^2_{L_2(\Omega)} + (\alpha + \beta)\norm{\Div Z}^2_{L_2(\Omega)} \\
		\leq - \int_{\Omega} V\cdot \nabla u \cdot V\, \ud x  - 4\nu_r \int_{\Omega}\omega\cdot Z\, \ud x - \int_{\Omega} V \cdot \nabla \Theta \cdot Z\, \ud x + 2\nu_r\int_{\Omega} \Rot v\cdot Z\, \ud x.
	\end{multline*}
	Utilizing \cite[Lemma 6.7]{2012arXiv1205.4046N} on the left-hand side yields
	\begin{multline*}
		\frac{1}{2}\Dt\left(\norm{V}^2_{L_2(\Omega)} + \norm{Z}^2_{L_2(\Omega)}\right) + \frac{\nu}{c_{\Omega}}\norm{V}^2_{H^1(\Omega)} + \frac{\alpha}{c_{\Omega}}\norm{Z}^2_{H^1(\Omega)} \\
		\leq - \int_{\Omega} V\cdot \nabla u \cdot V\, \ud x  - 4\nu_r \int_{\Omega}\omega\cdot Z\, \ud x - \int_{\Omega} V \cdot \nabla \Theta \cdot Z\, \ud x + 2\nu_r\int_{\Omega} \Rot v\cdot Z\, \ud x.
	\end{multline*}
	For the first term we have
	\begin{multline*}
		- \int_{\Omega} V\cdot \nabla u \cdot V\, \ud x \leq \norm{V}_{L_4(\Omega)}^2\norm{\nabla u}_{L_2(\Omega)} \leq c_I\norm{V}_{L_6(\Omega)}^{\frac{3}{2}}\norm{V}_{L_2(\Omega)}^{\frac{1}{2}}\norm{u}_{H^1(\Omega)} \\
		\leq \epsilon_1 c_I\norm{V}_{H^1(\Omega)}^2 + \frac{c_I}{4\epsilon_1} \norm{V}_{L_2(\Omega)}^2\norm{u}_{H^1(\Omega)}^4,
	\end{multline*}
	where we used the H\"older inequality, the interpolation inequality between $L_2(\Omega)$ and $L_6(\Omega)$ and the Young inequality.
	
	For the second term we get
	\begin{equation*}
		- 4\nu_r \int_{\Omega}\omega\cdot Z\, \ud x \leq 4\nu_r\norm{\omega}_{L_2(\Omega)}	\norm{Z}_{L_2(\Omega)} \leq 4\nu_r\epsilon_2\norm{\omega}_{L_2(\Omega)}^2 + \frac{\nu_rc_I}{\epsilon_2}\norm{Z}_{H^1(\Omega)}^2.
	\end{equation*}
	
	The third term we estimate in the following way
	\begin{equation*}
		- \int_{\Omega} V \cdot \nabla \Theta \cdot Z\, \ud x \leq \norm{V}_{L_6(\Omega)}\norm{\nabla \Theta}_{L_3(\Omega)}\norm{Z}_{L_2(\Omega)} \leq c_I\epsilon_3\norm{V}_{H^1(\Omega)}^2 + \frac{1}{4\epsilon_3}\norm{\nabla \Theta}_{L_3(\Omega)}^2\norm{Z}_{L_2(\Omega)}^2.
	\end{equation*}
	
	Finally, for the fourth term we have
	\begin{equation*}
		2\nu_r \int_{\Omega}\Rot v\cdot Z\, \ud x \leq 2\nu_r\norm{\Rot v}_{L_2(\Omega)} \norm{Z}_{L_2(\Omega)} \leq 2\nu_r\epsilon_4\norm{\Rot v}_{L_2(\Omega)}^2 + \frac{\nu_rc_I}{2\epsilon_4}\norm{Z}_{H^1(\Omega)}^2.
	\end{equation*}
	
	We set $\epsilon_1 c_I = \epsilon_3 c_I = \frac{\nu}{4c_{\Omega}}$ and $\frac{\nu_rc_I}{\epsilon_2} = \frac{\nu_rc_I}{2\epsilon_4} = \frac{\alpha}{4c_{\Omega}}$. Hence
	\begin{align*}
		\frac{c_I}{4\epsilon_1} &= \frac{c_{I,\Omega}}{\nu}, & & & 4\nu_r\epsilon_2 &= \frac{16\nu_r^2c_{I,\Omega}}{\alpha}, & & & \frac{1}{4\epsilon_3} &= \frac{c_{I,\Omega}}{\nu}, & & & 4\nu_r\epsilon_4 &= \frac{8\nu_r^2c_{I,\Omega}}{\alpha}
	\end{align*}
	and we obtain
	\begin{multline*}
		\frac{1}{2}\Dt\left(\norm{V}^2_{L_2(\Omega)} + \norm{Z}^2_{L_2(\Omega)}\right) + \frac{\nu}{2c_{\Omega}}\norm{V}^2_{H^1(\Omega)} + \frac{\alpha}{2c_{\Omega}}\norm{Z}^2_{H^1(\Omega)} \\
		\leq \frac{c_{I,\Omega}}{\nu}\norm{V}_{L_2(\Omega)}^2\norm{u}_{H^1(\Omega)}^4 + \frac{16\nu_r^2c_{I,\Omega}}{\alpha}\norm{\omega}_{L_2(\Omega)}^2 \\
		+ \frac{c_{I,\Omega}}{\nu}\norm{\nabla \Theta}_{L_3(\Omega)}^2\norm{Z}_{L_2(\Omega)}^2 + \frac{8\nu_r^2c_{I,\Omega}}{\alpha}\norm{\Rot v}_{L_2(\Omega)}^2.
	\end{multline*}
	We introduce
	\begin{equation}\label{eq44}
		\begin{aligned}
			\Delta_1(\nu) &= \frac{\nu}{c_{\Omega}} - \frac{2c_{I,\Omega}}{\nu}\norm{u}_{H^1(\Omega)}^4, \\
			\Delta_2(\nu) &= \frac{\alpha}{c_{\Omega}} - \frac{2c_{I,\Omega}}{\nu}\norm{\nabla \Theta}_{L_3(\Omega)}^2, \\
			\Delta(\nu) &= \min\left\{\Delta_1(\nu),\Delta_2(\nu)\right\}.
		\end{aligned}
	\end{equation}
	Thus, the last inequality implies
	\begin{multline*}
		\Dt\left(\norm{V}^2_{L_2(\Omega)} + \norm{Z}^2_{L_2(\Omega)}\right) + \Delta(\nu)\left(\norm{V}^2_{L_2(\Omega)} + \norm{Z}^2_{L_2(\Omega)}\right) \\
		\leq \frac{32\nu_r^2c_{I,\Omega}}{\alpha}\norm{\omega}_{L_2(\Omega)}^2 + \frac{16\nu_r^2c_{I,\Omega}}{\alpha}\norm{\Rot v}_{L_2(\Omega)}^2,
	\end{multline*}
	which is equivalent to
	\begin{equation*}
		\Dt\left(\left(\norm{V}^2_{L_2(\Omega)} + \norm{Z}^2_{L_2(\Omega)}\right)e^{\Delta(\nu)t}\right) \leq \nu_r^2c_{\alpha,I,\Omega}\left(\norm{\omega}_{L_2(\Omega)}^2 + \norm{\Rot v}_{L_2(\Omega)}^2\right)e^{\Delta(\nu)t}.
	\end{equation*}
	Integrating with respect to $t \in (t_0,t_1)$ yields
	\begin{multline*}
		\norm{V(t)}^2_{L_2(\Omega)} + \norm{Z(t)}^2_{L_2(\Omega)} \\
		\leq \nu_r^2c_{\alpha,I,\Omega}\left(\norm{\omega}_{L_2(\Omega^t)}^2 + \norm{\Rot v}_{L_2(\Omega^t)}^2\right) + \left(\norm{V(t_0)}^2_{L_2(\Omega)} + \norm{Z(t_0)}^2_{L_2(\Omega)}\right)e^{-\Delta(\nu)t}.
	\end{multline*}
	In view of the energy estimates (see e.g. \cite[Lemma 8.1]{2012arXiv1205.4046N}) we obtain
	\begin{equation*}
		\norm{\omega}_{L_2(\Omega^t)}^2 + \norm{\Rot v}_{L_2(\Omega^t)}^2 < E_{v,\omega}(t).
	\end{equation*}
	
	On the other hand, under slightly weaker assumption than in Theorem \ref{thm0} it follows from \cite[Theorem B]{wm6} that
	\begin{equation}\label{eq10}
		\norm{u}_{W^{2,1}_2(\Omega^t)} \leq \varphi(\nu,\text{data})
	\end{equation}
	for any $t \in [0,\infty]$, where $\varphi$ is a positive and non-decreasing function dependent on the viscosity coefficient and the initial and the external data. By the Embedding Theorem for anisotropic Sobolev spaces (see e.g. \cite[Ch. 2, \S 3, Lemma 3.3]{lad} it follows that $W^{2,1}_2(\Omega^t) \hookrightarrow L_{10}(\Omega^t)$ and
	\begin{equation*}
		\norm{u}_{L_{10}(\Omega^t)} \leq c_{\Omega} \norm{u}_{W^{2,1}_2(\Omega^t)} \leq \varphi(\text{data})
	\end{equation*}
	holds. In light of the maximal regularity for parabolic systems (see e.g.  \cite[Lemma 6.9]{2012arXiv1205.4046N}) applied to \eqref{p11}$_3$ we deduce that $\Theta \in W^{2,1}_{\frac{5}{3}}(\Omega^t)$ and
	\begin{equation*}
		\norm{\Theta}_{W^{2,1}_{\frac{5}{3}}(\Omega^t)} \leq c_{\text{data},\alpha,\beta,\Omega} \left(\norm{g}_{L_{\frac{5}{3}}(\Omega^t)} + \norm{u}_{L_{10}(\Omega^t)} + \norm{\Theta(t_0)}_{W^{\frac{4}{5}}_{\frac{5}{3}}(\Omega)}\right).
	\end{equation*}
	From \eqref{eq10}, the above inequality and the maximal regularity for the Stokes and parabolic system (see e.g. \cite[Lemmas 6.9 and 6.11]{2012arXiv1205.4046N}) it follows that we can improve the regularity for $(u,\Theta)$ as we need. In particular
	\begin{equation*}
		\begin{aligned}
			\sup_{t \geq 0} \norm{u(t)}^4_{H^1(\Omega)} &\leq c_{\text{data}}, \\
			\sup_{t \geq 0} \norm{\nabla \Theta(t)}^2_{L_3(\Omega)} &\leq c_{\text{data}}.
		\end{aligned}
	\end{equation*}
	In consequence, $\Delta(\nu)$ becomes positive and finite for $\nu$ sufficiently large. This ends the proof.
\end{proof}

\begin{rem}
	Observe, that for $u(t_0) = v(t_0)$ and $\Theta(t_0) = \omega(t_0)$ we obtain
	\begin{equation*}
		\norm{V(t)}^2_{L_2(\Omega)} + \norm{Z(t)}^2_{L_2(\Omega)} \leq \nu_r^2c_{\alpha,I,\Omega}\left(\norm{\omega}_{L_2(\Omega^t)}^2 + \norm{\Rot v}_{L_2(\Omega^t)}^2\right).
	\end{equation*} 
	This estimate implies that as $\nu_r \to 0$ the velocity of the micropolar fluid model converges uniformly with respect to time on $[0,\infty)$ in $L_2$ to the velocity field of standard Navier-Stokes model.
\end{rem}

\bibliographystyle{amsalpha}
\bibliography{bibliography}

\end{document}